\author{Michael Shulman}
\thanks{This material is based on research sponsored by The United
  States Air Force Research Laboratory under agreement number
  FA9550-15-1-0053.  The U.S.~Government is authorized to reproduce
  and distribute reprints for Governmental purposes notwithstanding
  any copyright notation thereon.  The views and conclusions contained
  herein are those of the author and should not be interpreted as
  necessarily representing the official policies or endorsements,
  either expressed or implied, of the United States Air Force Research
  Laboratory, the U.S.~Government, or Carnegie Mellon University.}
\title{Contravariance through enrichment}
\keywords{opposite category, contravariant functor, generalized
multicategory, enriched category, coherence theorem}
\let\ea\expandafter
\def\foreachletter#1#2#3{\foreachcount=#1
  \ea\loop\ea\ea\ea#3\@alph\foreachcount
  \advance\foreachcount by 1
  \ifnum\foreachcount<#2\repeat}
\def\foreachLetter#1#2#3{\foreachcount=#1
  \ea\loop\ea\ea\ea#3\@Alph\foreachcount
  \advance\foreachcount by 1
  \ifnum\foreachcount<#2\repeat}
\def\definecal#1{\ea\gdef\csname c#1\endcsname{\ensuremath{\mathcal{#1}}\xspace}}
\def\definebold#1{\ea\gdef\csname b#1\endcsname{\ensuremath{\mathbf{#1}}\xspace}}
\def\definebb#1{\ea\gdef\csname d#1\endcsname{\ensuremath{\mathbb{#1}}\xspace}}
\def\definefrak#1{\ea\gdef\csname f#1\endcsname{\ensuremath{\mathfrak{#1}}\xspace}}
\def\definebar#1{\ea\gdef\csname #1bar\endcsname{\ensuremath{\overline{#1}}\xspace}}
\def\defineul#1{\ea\gdef\csname u#1\endcsname{\ensuremath{\underline{#1}}\xspace}}
\def\autofmt@b#1\autofmt@end{\mathbf{#1}}
\def\autofmt@d#1#2\autofmt@end{\mathbb{#1}\mathsf{#2}}
\def\autofmt@c#1#2\autofmt@end{\mathcal{#1}\mathit{#2}}
\def\autofmt@u#1\autofmt@end{\underline{\smash{\mathsf{#1}}}}
\def\auto@drop#1{}
\def\autodef#1{\ea\ea\ea\@autodef\ea\ea\ea#1\ea\auto@drop\string#1\autodef@end}
\def\@autodef#1#2#3\autodef@end{%
  \ea\def\ea#1\ea{\ea\ensuremath\ea{\csname autofmt@#2\endcsname#3\autofmt@end}\xspace}}
\def\autodefs@end{blarg!}
\def\autodefs#1{\@autodefs#1\autodefs@end}
\def\@autodefs#1{\ifx#1\autodefs@end%
  \def\autodefs@next{}%
  \else%
  \def\autodefs@next{\autodef#1\@autodefs}%
  \fi\autodefs@next}
\DeclareSymbolFont{bbold}{U}{bbold}{m}{n}
\DeclareSymbolFontAlphabet{\mathbbb}{bbold}
\newcommand{\done}{\ensuremath{\mathbbb{1}}\xspace}
\newcommand{\op}{^{\mathrm{op}}}
\newcommand{\co}{^{\mathrm{co}}}
\newcommand{\coop}{^{\mathrm{coop}}}
\let\ten\otimes
\newcommand{\too}[1][]{\ensuremath{\overset{#1}{\longrightarrow}}}
\let\xto\xrightarrow
\def\toiso{\xto{\smash{\raisebox{-.5mm}{$\scriptstyle\sim$}}}}
\def\slashedarrowfill@#1#2#3#4#5{%
  $\m@th\thickmuskip0mu\medmuskip\thickmuskip\thinmuskip\thickmuskip
   \relax#5#1\mkern-7mu%
   \cleaders\hbox{$#5\mkern-2mu#2\mkern-2mu$}\hfill
   \mathclap{#3}\mathclap{#2}%
   \cleaders\hbox{$#5\mkern-2mu#2\mkern-2mu$}\hfill
   \mkern-7mu#4$%
}
\def\rightslashedarrowfill@{%
  \slashedarrowfill@\relbar\relbar\mapstochar\rightarrow}
\newcommand\xslashedrightarrow[2][]{%
  \ext@arrow 0055{\rightslashedarrowfill@}{#1}{#2}}
\newcommand{\hto}{\xslashedrightarrow{}}
\let\xhto\xslashedrightarrow
  \let\your@state\state
  \def\state#1{\my@state#1}
  \def\my@state#1.{\gdef\currthmtype{#1}\your@state{#1.}}
  \let\your@staterm\staterm
  \def\staterm#1{\my@staterm#1}
  \def\my@staterm#1.{\gdef\currthmtype{#1}\your@staterm{#1.}}
  \let\@defthm\newtheorem
  \def\switchtotheoremrm{\let\@defthm\newtheoremrm}
  \def\defthm#1#2#3{\@defthm{#1}{#2}} 
  \def\currthmtype{}
  \def\autoref#1{\ref*{label@name@#1}~\ref{#1}}
  \let\your@section\section
  \def\section{\gdef\currthmtype{section}\your@section}
    \let\old@label\label%
    \def\label#1{%
      {\let\your@currentlabel\@currentlabel%
        \edef\@currentlabel{\currthmtype}%
        \old@label{label@name@#1}}%
      \old@label{#1}}
  \let\cref\autoref
\newtheorem{thm}{Theorem}[section]
\let\qed\endproof
\let\c@equation\c@subsection
\numberwithin{equation}{section}
\let\al\alpha
\let\be\beta
\let\gm\gamma
\let\ep\varepsilon
\let\om\omega
\let\ze\zeta
\let\th\theta
\let\V\bV
\let\A\uA
\let\B\uB
\newcommand{\fii}{\mathfrak{i}}
\renewcommand{\o}{^\circ}
\newcommand{\oprime}{^\circ{}'}
\newcommand{\ow}{^{\lozenge}}
\newcommand{\oo}{\o{}\o}
\newcommand{\ooo}{\o{}\o{}\o}
\newcommand{\oooo}{\o{}\o{}\o{}\o}
\newcommand{\p}{^+}
\newcommand{\m}{^-}
\newcommand{\unit}{\done}
\newcommand{\du}{\done^{-}}
\newcommand{\twu}[1]{\done^{#1}}
\newcommand{\ob}{\operatorname{ob}}
\newcommand{\homr}[2]{#1\mathrel{\fatbslash} #2}
\newcommand{\homl}[2]{#2 \mathrel{\!\!\fatslash} #1}
\newcommand{\homrbare}{\fatbslash}
\newcommand{\homlbare}{\!\!\fatslash\,}
\newcommand{\pow}[2]{#2 \mathrel{\oslash} #1}
\newcommand{\wcat}{\ensuremath{\mathbf{W}\text{-}\cCat}\xspace}
\newcommand{\wprof}{\ensuremath{\mathbf{W}\text{-}\dProf}\xspace}
\newcommand{\wmat}{\ensuremath{\mathbf{W}\text{-}\dMat}\xspace}
\newcommand{\hkl}{\ensuremath{\mathbb{H}\text{-}\mathsf{Kl}}}
\newcommand{\twgw}{\ensuremath{\textstyle\int_G\!\bW}\xspace}
\newcommand{\twgcw}{\ensuremath{\textstyle\int_G\!\cW}\xspace}
\newcommand{\twgwmat}{\ensuremath{(\twgw)\text{-}\dMat}\xspace}
\newcommand{\twocat}{2\text-\cCat}
\newcommand{\act}[2]{{#2}^{#1}}
\newcommand{\pact}[2]{{(#2)}^{#1}}
\newcommand{\Bigpact}[2]{\Big(#2\Big)^{#1}}
\newcommand{\awinv}{\ensuremath{\cA[\cW^{-1}]}\xspace}
\newcommand{\tovar}[2][]{\xrightarrow[#2]{#1}}
\newcommand{\alg}{\text{-}\cAlg}
\newcommand{\algs}{\text{-}\cAlg_s}
\newcommand{\psalg}{\text{-}\cPs\cAlg}
\begin{document}
\maketitle

\begin{abstract}
  We define strict and weak duality involutions on 2-categories, and
  prove a coherence theorem that every bicategory with a weak duality
  involution is biequivalent to a 2-category with a strict duality
  involution.  For this purpose we introduce ``2-categories with
  contravariance'', a sort of enhanced 2-category with a basic notion
  of ``contravariant morphism'', which can be regarded either as
  generalized multicategories or as enriched categories.  This enables
  a universal characterization of duality involutions using absolute
  weighted colimits, leading to a conceptual proof of the coherence
  theorem.
\end{abstract}

\tableofcontents

\section{Introduction}
\label{sec:introduction}

One of the more mysterious bits of structure possessed by the
2-category \cCat is its \emph{duality involution}
\[ (-)\op : \cCat\co \to \cCat. \]
(As usual, the notation $(-)\co$ denotes reversal of 2-cells but not
1-cells.)  Many familiar 2-categories possess similar involutions,
such as 2-categories of enriched or internal categories, the
2-category of monoidal categories and strong monoidal functors, or
$[\A,\cCat]$ whenever \A is a locally groupoidal 2-category; and they
are an essential part of much standard category theory.

However, there does not yet exist a complete abstract theory of such
``duality involutions''.  A big step forward was the observation by
Day and Street~\cite{ds:monbi-hopfagbd} that $A\op$ is a monoidal dual
of $A$ in the monoidal bicategory of profunctors.  As important and
useful as this fact is, it does not exhaust the properties of
$(-)\op$; indeed, it does not even determine $A\op$ up to equivalence!

In this paper we study duality involutions like $(-)\op$ acting on
2-categories like \cCat, rather than bicategories like \cProf.  (We
leave it for future work to combine the two, perhaps with a theory of
``duality involutions on proarrow equipments''.  One step in that
direction was taken by~\cite{weber:2toposes}, in the case where
profunctors are represented by discrete two-sided fibrations.)  Note
that in most of the examples cited above, $(-)\op$ is a 2-functor that
is a \emph{strict} involution, in that we have $(A\op)\op = A$ on the
nose.  On the other hand, from a higher-categorical perspective it
would be more natural to ask only for a \emph{weak} duality
involution, where $(-)\op$ is a pseudofunctor that is self-inverse up
to coherent pseudo-natural equivalence.  For instance, strict duality
involutions are not preserved by passage to a biequivalent bicategory,
but weak ones are.

The main result of this paper is that there is no loss of generality
in considering only strict involutions.  More precisely, we prove the
following coherence theorem.

\begin{thm}\label{thm:intro} Every bicategory with a weak duality
involution is biequivalent to a 2-category with a strict duality
involution, by a biequivalence which respects the involutions up to
coherent equivalence.
\end{thm}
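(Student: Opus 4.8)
The plan is to carry out the classical bicategory-strictification argument---embed via Yoneda into a strict 2-category of presheaves---but inside a setting rich enough that the duality involution is visible as part of the enriched structure, so that strictifying composition strictifies the involution at the same time. Concretely, I would first recast a ``bicategory with weak duality involution'' as a weak structure over a base that builds contravariance into the enrichment, using the two descriptions of ``2-categories with contravariance'' as generalized multicategories and as enriched categories. The base itself should be the Grothendieck-type construction $\twgw$ for $G=\mathbb{Z}/2$, designed precisely so that the order-two duality action becomes honest enrichment rather than extra structure. The multicategory/algebra side then supplies a general coherence theorem, while the enriched side supplies a Yoneda embedding and an ample stock of absolute weighted colimits.

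Here are the steps. First, package the weak involution $(-)\op\colon\cB\co\to\cB$ together with the composition of $\cB$ as a single pseudo-algebra for the relevant 2-monad (equivalently as a pseudo $\twgw$-enriched category), so that the self-inverse pseudonatural equivalence $((-)\op)\op\simeq\mathrm{id}$ and all of its coherence modifications become the coherence data of one pseudomorphism. Second, apply the coherence theorem for pseudo-algebras to replace this by an equivalent \emph{strict} algebra; this already produces a 2-category carrying a strict duality involution, together with a biequivalence to $\cB$ that respects the structure. Third, and more conceptually, realize the same strictification as the full image of an enriched Yoneda embedding $\cB\to[\cB\op,\bW]$: the presheaf 2-category is strict and its contravariance is defined pointwise, hence on the nose, so the involution it carries is automatically strict, and Yoneda is a local equivalence and therefore a biequivalence onto its image. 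Fourth, characterize $(-)\op$ as an absolute weighted colimit in the contravariant-enriched sense, so that it is preserved by every enriched functor; this is what guarantees that the comparison biequivalence respects the involutions up to coherent equivalence, rather than up to some uncontrolled comparison cell.

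The technical heart, and the step I expect to be the main obstacle, lies in the setup that makes the fourth step possible: exhibiting the ``contravariant hom'' as a genuine absolute weighted colimit in the base $\twgw$, and verifying that the weak-duality coherence data assembles into exactly the coherence demanded of a pseudomorphism of algebras in the first step. Once the involution is an absolute colimit, its strictness in the Yoneda image and its preservation along the comparison are formal, and the pseudo-algebra coherence theorem supplies the rest; the real work is in choosing the enrichment base so that contravariance is literally an absolute colimit, and in checking that the pointwise involution on presheaves is strictly, not merely coherently, involutive.
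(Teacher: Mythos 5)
Your outline follows the paper's own skeleton (contravariance as enrichment over the twisted convolution base $\twgw$, strictification via the pseudo-algebra coherence theorem or a Yoneda embedding into a strict module category, and opposites as copowers by the dualizable weight $\du$, hence absolute), but two of your steps are asserted too quickly and are wrong as stated. First, strictifying the pseudo-algebra --- equivalently, applying coherence for $\cV$-enriched bicategories --- produces a $\cV$-enriched \emph{category}, i.e.\ a 2-category with contravariance, in which \emph{composition} is strict but the opposites are still only weak: strictness of the algebra structure lives at the hom level, whereas a strict duality involution requires strict opposites, i.e.\ on-the-nose isomorphisms $\A\m(x,y)\cong\A\p(x\o,y)$, which is a representability property at the object level that strictification does not supply. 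So your claim that step 2 ``already produces a 2-category carrying a strict duality involution'' elides two further steps the paper needs: a free cocompletion under strict opposites, which is a biequivalence precisely because opposites are absolute (this is where your step 4 actually earns its keep, not merely in ``preservation along the comparison''), and then a passage from the resulting \emph{strong} involution (a normal pseudo algebra for $T\cA=\cA+\cA\co$) to a strict one via the bo-ff factorization form of the 2-monadic coherence theorem. The paper's careful ladder of distinct notions --- strict enrichment, strict opposites, strong involution, strict involution --- is collapsed in your step 2, and the collapse is not innocent.

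Second, your Yoneda variant has a concrete closure problem: the full image of the Yoneda embedding is \emph{not} closed under the strict pointwise involution of the module category, since that involution carries a representable $Y_z$ to the module $Y_z\o$ defined by $(Y_z\o)\p(x)=\A\m(x,z)\op$, which is not representable. One must take the full subcategory on all the $Y_z$ \emph{and} all the $Y_z\o$; this is strictly closed under the involution, and it is biequivalent to the image of Yoneda only because $\cB$ has weak opposites, giving $Y_z\o\simeq Y_{z'}$ for $z'$ a weak opposite of $z$ --- exactly the construction the paper sketches in its closing remark. Two smaller points: since $\cV$ is not symmetric, $\cB\op$ does not exist as an enriched category, so the presheaf category $[\cB\op,\bW]$ you invoke is unavailable and must be replaced by the module category $\cM\A$ of Garner--Shulman; and your step 1 cannot literally package a weak duality involution as a pseudo-algebra for the 2-monad $T$ on $\twocat$ (the underlying object is a bicategory and the action a pseudofunctor --- the paper notes explicitly that no theory of generalized multi-bicategories exists); the legitimate move, which your parenthetical gestures at and whose verification (the pentagon for the composites $\beta\circ\act{h}{\alpha}\circ\fy_{g,h}$) you rightly flag as the technical heart, is the direct construction of the $\cV$-enriched bicategory with $\A^g(x,y)=\cA(\act{g}{x},y)$.
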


Let me now say a few words about the proof of \autoref{thm:intro},
which I regard as more interesting than its statement.  Often, when
proving a coherence theorem for categorical structure at the level of
\emph{objects}, it is helpful to consider first an additional
structure at the level of \emph{morphisms}, whose presence enables the
object-level structure to be characterized by a universal property.
For instance, instead of pseudofunctors $A\op\to \cCat$, we may
consider categories over $A$, among which those underlying some
pseudofunctor (the fibrations) are characterized by the existence of
cartesian arrows, which have a universal property.  Similarly, instead
of monoidal categories, we may consider multicategories, among which
those underlying some monoidal category are characterized by the
existence of representing objects, which also have a universal
property.

An abstract framework for this procedure is the theory of
\emph{generalized multicategories};
see~\cite{hermida:coh-univ,cs:multicats} and the numerous other
references in~\cite{cs:multicats}.  In general, for a suitably nice
2-monad $T$, in addition to the usual notions of strict and pseudo
$T$-algebra, there is a notion of \emph{virtual} $T$-algebra, which
contains additional kinds of morphisms whose domain ``ought to be an
object given by a $T$-action if such existed''.  For example, if $T$
is the 2-monad for strict monoidal categories, then a virtual
$T$-algebra is an ordinary multicategory, in which there are
``multimorphisms'' whose domains are finite lists of objects that
``ought to be tensor products if we had a monoidal category''.

In our case, it is easy to write down a 2-monad whose strict algebras
are 2-categories with a strict duality involution: it is
$T\cA = \cA + \cA\co$.  A virtual algebra for this 2-monad is, roughly
speaking, a 2-category equipped with a basic notion of ``contravariant
morphism''.  That is, for each pair of objects $x$ and $y$, there are
two hom-categories $\A\p(x,y)$ and $\A\m(x,y)$, whose objects we call
\emph{covariant} and \emph{contravariant} morphisms respectively.
Composition is defined in the obvious way: the composite of two
morphisms of the same variance is covariant, while the composite of
two morphisms of different variances is contravariant.  In addition,
postcomposing with a contravariant morphism is contravariant on
2-cells.  We call such a gadget a \textbf{2-category with
  contravariance}.

As with any sort of generalized multicategory, we can characterize the
virtual $T$-algebras that are pseudo $T$-algebras by a notion of
\emph{representability}.  This means that for each object $x$, we have
an object $x\o$ and isomorphisms $\A\m(x,y) \cong \A\p(x\o,y)$ and
$\A\p(x,y) \cong \A\m(x\o,y)$, jointly natural in $y$.  We call an
object $x\o$ with this property a \emph{(strict) opposite} of $x$.
The corresponding pseudo $T$-algebra structure describes this
operation $(-)\o$ as a \emph{strong} duality involution on the
underlying 2-category $\A\p$, meaning a strict 2-functor
$(\A\p)\co \to \A\p$ that is self-inverse up to coherent strict
2-natural isomorphism.

Now, it turns out that 2-categories with contravariance are not just
generalized multicategories: they are also \emph{enriched
  categories}.\footnote{Representation of additional structure on a
  category as enrichment occurs in many other places; see for
  instance~\cite{gp:enr-var,ls:limlax,garner:F,gp:enr-lawvere,garner:emb-tancat}.}
Namely, there is a (non-symmetric) monoidal category, denoted \V (for
Variance), such that \V-enriched categories are the same as
2-categories with contravariance.  (As a category, \V is just
$\bCat\times\bCat$, but its monoidal structure is not the usual one.)
From this perspective, we can alternatively describe strict opposites
as \emph{weighted colimits}: $x\o$ is the copower (or ``tensor'') of
$x$ by a particular object $\du$ of \V, called the \emph{dual unit}.
Since $\du$ is dualizable in \V, opposites are an \emph{absolute} or
\emph{Cauchy} colimit in the sense of~\cite{street:absolute}: they are
preserved by all \V-enriched functors.  It follows that any
2-category-with-contravariance has a ``completion'' with respect to
opposites, and this operation is idempotent.

We have now moved into a context having a straightforward
bicategorical version.  We simply observe that \V can be made into a
monoidal 2-category, and consider \emph{\V-enriched bicategories}; we
call these \emph{bicategories with contravariance}.  In such a
bicategory we can consider ``weak opposites'', asking only for
pseudonatural equivalences $\A\m(x,y) \simeq \A\p(x\o,y)$ and
$\A\p(x,y) \simeq \A\m(x\o,y)$; these are ``absolute weighted
bicolimits'' in the sense of~\cite{gs:freecocomp}.  Since any
isomorphism of categories is an equivalence, any strict opposite is
also a weak one.  (More abstractly, strict opposites should be
\emph{flexible colimits}~\cite{bkps:flexible} in a suitable sense, but
we will not make this precise.)

Now, it is straightforward to generalize the coherence theorem for
bicategories to a coherence theorem for \emph{enriched} bicategories.
Therefore, any bicategory with contravariance is biequivalent to a
2-category with contravariance.  This suggests that the process by
which we arrived at \V-enriched categories could be duplicated on the
bicategorical side, yielding the following ``ladder'' strategy for
proving \autoref{thm:intro}:
\begin{equation}
  \vcenter{\xymatrix@C=4pc{
      \text{\parbox{1.5in}{\centering \V-enriched bicategories\\with weak opposites}}\ar[r]^{\text{coherence theorem}}_{\text{for bicategories}} &
      \text{\parbox{1.5in}{\centering \V-enriched categories\\with strict opposites}}\ar[d]\\
      \text{\parbox{2in}{\centering representable\\$T$-multi-bicategories}}\ar[u] &
      \text{\parbox{2in}{\centering representable\\$T$-multicategories\\(virtual $T$-algebras)}}\ar[d]\\
      \text{\parbox{2in}{\centering bicategories with\\weak duality involution}}\ar[u] &
      \text{\parbox{2in}{\centering 2-categories with\\strong duality involution\\(pseudo $T$-algebras)}}\ar[d]\\
      & \text{\parbox{2in}{\centering 2-categories with\\strict duality involution\\(strict $T$-algebras)}}
    }}
\end{equation}
There are three problems with this idea, two minor and one major.  The
first is that it (apparently) produces only a strong duality
involution rather than a strict one, necessitating an extra step at
the bottom-right of the ladder, as shown.  However, the
strictification of pseudo-algebras for 2-monads is fairly
well-understood, so we can apply a general coherence
result~\cite{power:coherence,lack:codescent-coh}.

The second problem is that \textit{a priori}, the coherence theorem
for \V-enriched bicategories does not also strictify the weak
opposites into strict opposites.  However, this is also easy to
remedy: since the strictification of a \V-bicategory with weak
opposites will still have weak opposites, and any strict opposite is
also a weak one, it will be biequivalent to its free cocompletion
under strict opposites.

The third, and more major, problem with this strategy is that there is
no extant theory of ``generalized multi-bicategories''.  We could
develop such a theory, but it would take us rather far afield.  Thus,
instead we will ``hop over'' that rung of the ladder by constructing a
$\V$-enriched bicategory with weak opposites directly from a
bicategory with a weak duality involution, by a ``beta-reduced'' and
weakened version of the analogous operation on the other side.

Since this direct construction also includes the strict case, we
could, formally speaking, dispense with the multicategories on the
other side as well.  Indeed, the entire proof can be beta-reduced into
a more compact form: if we prove the coherence theorem for enriched
bicategories using a Yoneda embedding, the strictification and
cocompletion processes could be combined into one and tweaked slightly
to give a strict duality involution directly.

In fact, there are not many applications of \autoref{thm:intro}
anyway.  First of all, it is not all that easy to think of naturally
occurring duality involutions that are not already strict.  But here
are a few:
\begin{enumerate}[leftmargin=2em,label=(\arabic*)]
\item The 2-category of fibrations over some base category \bS has a
``fiberwise'' duality involution, but since its action on non-vertical
arrows has to be constructed in a more complicated way than simply
turning them around, it is not strict.
\item If \cB is a compact closed
bicategory~\cite{ds:monbi-hopfagbd,stay:ccb}, then its bicategory
$\cMap(\cB)$ of maps (left adjoints) has a duality involution that is
not generally strict.
\item If \cA is a bicategory with a duality involution, and \cW is a
class of morphisms in \cA admitting a calculus of
fractions~\cite{pronk:bicat-frac} and closed under the duality
involution, then the bicategory of fractions \awinv inherits a duality
involution that is not strict (even if the one on \cA was strict).
\end{enumerate} However, even in these cases \autoref{thm:intro} is
not as important as it might be, because Lack's coherence theorem
(``naturally occurring bicategories are biequivalent to naturally
occurring 2-categories'') applies very strongly to duality
involutions: nearly all naturally occurring bicategories with duality
involutions are biequivalent to some \emph{naturally occurring} strict
2-category with a strict duality involution.  For the examples above,
we have:
\begin{enumerate}[leftmargin=2em,label=(\arabic*)]
\item The 2-category of fibrations over \bS is biequivalent to the
2-category of \bS-indexed categories, which has a strict duality
involution inherited from \cCat.)
\item For the standard examples of compact closed bicategories such as
\cProf or \cSpan, the bicategory of maps is biequivalent to a
well-known strict 2-category with a strict duality involution, such as
$\cCat_{\mathrm{cc}}$ (Cauchy-complete categories) or \bSet.
\item Many naturally occurring examples of bicategories of fractions
are also biequivalent to well-known 2-categories with strict duality
involutions, such as some 2-category of stacks.
\end{enumerate} Thus, if \autoref{thm:intro} were the main point of
this paper, it would be somewhat disappointing.  However, I regard the
method of proof, and the entire ladder it gives rise to, as more
important than the result itself.  Representating contravariance using
generalized multicategories and enrichment seems a promising avenue
for future study of further properties of duality involutions.  From
this perspective, the paper is primarily a contribution to
\emph{enhanced 2-category theory} in the sense of~\cite{ls:limlax},
which just happens to prove a coherence theorem to illustrate the
ideas.

Furthermore, our abstract approach also generalizes to other types of
contravariance.  The right-hand side of the ladder, at least, works in
the generality of any group action on any monoidal category \bW.  The
motivating case of duality involutions on 2-categories is the case
when $\dZ/2\dZ$ acts on \bCat by $(-)\op$; but other actions
representing other kinds of contravariance include the following.
\begin{itemize}[leftmargin=2em]
\item $\dZ/2\dZ \times \dZ/2\dZ$ acts on $\mathbf{2Cat}$ by $(-)\op$
and $(-)\co$.  When $\mathbf{2Cat}$ is given the Gray monoidal
structure, this yields a theory of duality involutions on
Gray-categories.
\item $(\dZ/2\dZ)^n$ acts on strict $n$-categories (including the case
$n=\omega$), yielding duality involutions for strict
$(n+1)$-categories.  
not as interesting as weak ones, but their theory can point the way
towards a weak version.
\item $\dZ/2\dZ$ acts on the category $\mathbf{sSet}$ of simplicial
sets by reversing the directions of all the simplices.  With
simplicial sets modeling $(\infty,1)$-categories as quasicategories,
this yields a theory of duality involutions on a particular model for
$(\infty,2)$-categories (see for instance~\cite{rv:fibyon-oocosmos}).
\item Combining the ideas of the last two examples, $(\dZ/2\dZ)^n$
acts on the category of $\Theta_n$-spaces by reversing direction at
all dimensions, leading to duality involutions on an enriched-category
model for $(\infty,n+1)$-categories~\cite{br:cmp-infn-i}.
\end{itemize} We will not develop any of these examples further here,
but the perspective of describing contravariance through enrichment
may be useful for all of them as well.

We begin in \cref{sec:strong-duality} by defining weak, strong, and
strict duality involutions.  Then we proceed up the ladder from the
bottom right.  In \cref{sec:2-monadic-approach} we express strong and
strict duality involutions as algebra structures for a 2-monad, and
deduce that strong ones can be strictified.  In \cref{sec:genmulti} we
express strong duality involutions using generalized multicategories,
and in sections \ref{sec:contr-thro-enrichm}--\ref{sec:opposites} we
reexpress them using enrichment.  In \cref{sec:bicat-contra} we jump
over to the other side of the ladder, showing that weak duality
involutions on bicategories can be expressed using bicategorical
enrichment.  Then finally in \cref{sec:coherence} we cross the top of
the ladder with a coherence theorem for enriched bicategories.

\section{Duality involutions}
\label{sec:strong-duality}

In this section we define strict, strong, and weak duality
involutions, allowing us to state \autoref{thm:intro} precisely.

\begin{defn}\label{defn:duality-involution}
  A \textbf{weak duality involution} on a bicategory \cA consists of:
  \begin{itemize}[leftmargin=2em]
  \item A pseudofunctor $(-)\o : \cA\co \too \cA$.
  \item A pseudonatural adjoint equivalence
    \[ \xymatrix{
      \cA \ar[dr]_{((-)\o)\co} \ar@{=}[rr] && \cA.\\
      & \cA\co \ar[ur]_{(-)\o} \ar@{}[u]|(.6){\Downarrow\fy}
    }
    \]
  \item An invertible modification
    \[ \vcenter{\xymatrix{
        \cA\co \ar[r]^{(-)\o} & \cA \ar[dr]_{((-)\o)\co} \ar@{=}[rr] && \cA\\
        && \cA\co \ar[ur]_{(-)\o} \ar@{}[u]|(.6){\Downarrow\fy}
      }}
    \quad\overset{\ze}{\Longrightarrow}\quad
    \vcenter{\xymatrix{
        \cA\co \ar[dr]_{(-)\o} \ar@{=}[rr] && \cA\co \ar[r]^{(-)\o} & \cA\\
        & \cA \ar[ur]_{((-)\o)\co} \ar@{}[u]|(.6){\Downarrow\fy\co}
      }}
    \]
  whose components are therefore 2-cells $\ze_x: \fy_{x\o} \toiso (\fy_x)\o$.
  \item For any $x\in\cA$, we have
    \[\vcenter{\xymatrix{
        x \ar[r]^{\fy_x} &
        x\oo
        \ar@(ur,ul)[rr]^{\fy_{x\oo}}
        \ar@(dr,dl)[rr]_{(\fy_{x\o})\o}
        \ar@{}[rr]|{\Downarrow \ze_{x\o}} &&
        x\oooo
      }}
    \quad=\quad
    \vcenter{\xymatrix{
        & x\oo \ar@(r,u)[ddrr]^{\fy_{x\oo}}\\
        & {\scriptstyle \Downarrow\cong}\\
        x \ar[r]^{\fy_x} \ar[uur]^{\fy_x} &
        x\oo
        \ar@(ur,ul)[rr]^{(\fy_x)\oo}
        \ar@(dr,dl)[rr]_{(\fy_{x\o})\o}
        \ar@{}[rr]|{\Downarrow \ze_x\o} &&
        x\ooo{}\o
      }}
    \]
    (the unnamed isomorphism is a pseudonaturality constraint for \fy).
  \end{itemize}
  If \cA is a strict 2-category, a \textbf{strong duality involution}
  on \cA is a weak duality involution for which
  \begin{itemize}[leftmargin=2em]
  \item $(-)\o$ is a strict 2-functor,
  \item $\fy$ is a strict 2-natural isomorphism, and
  \item $\ze$ is an identity.
  \end{itemize}
  If moreover $\fy$ is an identity, we call it a \textbf{strict duality involution}.
\end{defn}

In particular, \fy and $\ze$ in a weak duality involution exhibit
$(-)\o$ and $((-)\o)\co$ as a biadjoint biequivalence between \cA and
$\cA\co$, in the sense of~\cite{gurski:biequiv}.  Similarly, in a
strong duality involution, \fy exhibits $(-)\o$ and $((-)\o)\co$ as a
2-adjoint 2-equivalence between \cA and $\cA\co$.  And, of course, in
a strict duality involution, $(-)\o$ and $((-)\o)\co$ are inverse
isomorphisms of 2-categories.

\begin{defn}\label{defn:duality-functor}
  If \cA and \cB are bicategories equipped with weak duality
  involutions, a \textbf{duality pseudofunctor} $F:\cA\to\cB$ is a
  pseudofunctor equipped with
  \begin{itemize}[leftmargin=2em]
  \item A pseudonatural adjoint equivalence
    \[\vcenter{\xymatrix{
        \cA\co\ar[d]_{(-)\o}\ar[r]^-{F\co} \drtwocell\omit{\fii} &
        \cB\co\ar[d]^{(-)\o}\\
        \cA\ar[r]_-F &
        \cB.
      }}\]
  \item An invertible modification
    \[\vcenter{\xymatrix@R=3pc@C=3pc{
        && \cA \ar[dl]_{((-)\o)\co} \ar[r]^F \dtwocell\omit{\fii}
        & \cB \ar[dl]|{((-)\o)\co} \ddluppertwocell^{1_\cB}{\fy}\\
        &\cA\co\ar[d]_{(-)\o}\ar[r]|-{F\co} \drtwocell\omit{\fii} &
        \cB\co\ar[d]|{(-)\o}\\
        &\cA\ar[r]_-F &
        \cB
      }}
    \quad \overset{\theta}{\Longrightarrow}\quad
    \vcenter{\xymatrix@R=3pc@C=3pc{
        & \cA \ar[dl]|{((-)\o)\co} \ddluppertwocell^{1_\cA}{\fy}\\
        \cA\co\ar[d]|{(-)\o}\\
        \cA
      }}
    \]
    whose components are therefore 2-cells in \cB of the following shape:
    \[\vcenter{\xymatrix{
        (F x)\oo\ar[r]^{(\fii_x)\o} \ar@{}[dr]|{\Downarrow \th_x} &
        (F (x\o))\o\ar[d]^{\fii_{x\o}}\\
        F x\ar[u]^{\fy_{F x}}\ar[r]_{F(\fy_x)} &
        F(x\oo).
      }}\]
  \item For any $x\in\cA$, we have
    \begin{multline}
      \vcenter{\xymatrix@R=3pc@C=1.5pc{
          (Fx)\ooo\ar[rr]^{(\fii_x)\oo} \drrtwocell\omit{\mathrlap{(\theta_x)\o}} &&
          (F(x\o))\oo\ar[d]^{(\fii_{x\o})\o}\\
          (Fx)\o\ar[rr]_{(F\fy_x)\o} \ar[d]_{\fii_x} \drrtwocell\omit{\cong}
          \utwocell^{\mathllap{\fy_{(F x)\o}}}_{\mathrlap{(\fy_{F x})\o}}{\ze_{F x}} &&
          (F(x\oo))\o \ar[d]^{\fii_{x\oo}}\\
          F(x\o) \ar[rr]_{F((\fy_x)\o)} && F(x\ooo)
        } }
      =
      \vcenter{\xymatrix@R=4pc@C=1.5pc{
          (Fx)\ooo \ar[r]^{(\fii_x)\oo} &
          (F(x\o))\oo \ar[rr]^{(\fii_{x\o})\o} \drrtwocell\omit{\mathrlap{\theta_{x\o}}} &&
          (F(x\oo))\o \ar[d]^{\fii_{x\oo}}\\
          (Fx)\o \ar[u]^{\fy_{(Fx)\o}} \ar[r]_{\fii_x} \urtwocell\omit{\cong} &
          F(x\o) \ar[u]|{\fy_{F(x\o)}} \rrtwocell^{F(\fy_{x\o})}_{F((\fy_x)\o)}{\mathrlap{F(\ze_x)}} &&
          F(x\ooo)
        }}
    \end{multline}
    (the unnamed isomorphisms are pseudonaturality constraints for $\fii$ and $\fy$).
  \end{itemize}
  If \cA and \cB are strict 2-categories with strong duality
  involutions, then a \textbf{(strong) duality 2-functor}
  $F:\cA\to\cB$ is a duality pseudofunctor such that
  \begin{itemize}[leftmargin=2em]
  \item $F$ is a strict 2-functor,
  \item $\fii$ is a strict 2-natural isomorphism, and
  \item $\th$ is an identity.
  \end{itemize}
  If $\fii$ is also an identity, we call it a \textbf{strict duality 2-functor}.
\end{defn}

Note that if the duality involutions of \cA and \cB are strict, then
the identity $\th$ says that $(\fii_x)\o = (\fii_{x\o})^{-1}$.  On the
other hand, if $\cA$ is a strict 2-category with two strong duality
involutions $(-)\o$ and $(-)\oprime$, to make the identity 2-functor
into a duality 2-functor is to give a natural isomorphism
$A\o \cong A\oprime$ that commutes with the isomorphisms $\fy$ and
$\fy'$.

Now \autoref{thm:intro} can be stated more precisely as:

\begin{thm}\label{thm:main}
  If $\cA$ is a bicategory with a weak duality involution, then there
  is a 2-category $\cA'$ with a strict duality involution and a
  duality pseudofunctor $\cA\to\cA'$ that is a biequivalence.
\end{thm}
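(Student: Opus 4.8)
The plan is to climb the ``ladder'' sketched in the introduction, but in the compressed form the author advertises: rather than developing a theory of generalized multi-bicategories, I would work entirely on the enriched side and ``hop over'' the missing rung. Concretely, starting from a bicategory $\cA$ with a weak duality involution $((-)\o,\fy,\ze)$, the first step is to build a $\V$-enriched bicategory $\tilde\cA$ (a \emph{bicategory with contravariance}) whose covariant hom-categories reproduce those of $\cA$ and whose contravariant hom-categories $\tilde\cA\m(x,y)$ are defined via the duality involution, roughly as $\cA(x\o,y)$ (equivalently $\cA(y\o,x)$). The data $\fy$ and $\ze$ are exactly what is needed to make the two possible definitions of $\tilde\cA\m$ coherently equivalent and to make composition of contravariant morphisms well-defined up to coherent isomorphism; this is the ``beta-reduced and weakened'' construction the author mentions. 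In this $\tilde\cA$, every object $x$ has a \emph{weak opposite} $x\o$, since $\fy$ supplies the required pseudonatural equivalences $\tilde\cA\m(x,y)\simeq\tilde\cA\p(x\o,y)$ and $\tilde\cA\p(x,y)\simeq\tilde\cA\m(x\o,y)$.

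Next I would invoke the coherence theorem for $\V$-enriched bicategories (the top rung, \cref{sec:coherence}) to replace $\tilde\cA$ by a biequivalent \emph{strict} $\V$-enriched category $\hat\cA$, i.e.\ a 2-category with contravariance. By the second ``minor problem'' remedy in the introduction, $\hat\cA$ still has weak opposites, so its free cocompletion under (absolute, hence idempotent) opposite-colimits $\hat\cA^{\mathrm{op}}$-completion is biequivalent to $\hat\cA$; this produces a $\V$-enriched category with \emph{strict} opposites. Translating back across the enriched/generalized-multicategory dictionary established in sections \ref{sec:contr-thro-enrichm}--\ref{sec:opposites}, a $\V$-category with strict opposites is precisely a representable virtual $T$-algebra, which is the same as a pseudo $T$-algebra, i.e.\ a 2-category with a \emph{strong} duality involution. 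Finally, the extra bottom-right step: apply the general strictification of pseudoalgebras for the 2-monad $T\cA=\cA+\cA\co$ \cite{power:coherence,lack:codescent-coh} to replace the strong duality involution by a genuinely \emph{strict} one, obtaining the desired $\cA'$.

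Throughout, I must track that each passage is not merely a biequivalence of underlying bicategories but \emph{respects the involutions}, so that the composite is a duality pseudofunctor in the sense of \autoref{defn:duality-functor}. This is where the modification $\theta$ and the coherence condition in that definition do real work: at each rung I would exhibit the comparison as carrying the data $\fii,\theta$, and check that the three conditions (the pseudonaturality of $\fii$, the modification axiom for $\theta$, and the final cubical identity) hold. Composing duality pseudofunctors yields a duality pseudofunctor, and a biequivalence composed with biequivalences is a biequivalence, giving the duality pseudofunctor $\cA\to\cA'$ that is a biequivalence.

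The main obstacle, as the author flags, is the ``hop'' at the first step: constructing $\tilde\cA$ directly and verifying that $\fy,\ze$ assemble into a coherent $\V$-enriched bicategory structure with weak opposites. The difficulty is bookkeeping the coherence 2-cells for composition involving contravariant morphisms---covariant$\,\circ\,$contravariant is contravariant, contravariant$\,\circ\,$contravariant is covariant, and postcomposition with a contravariant morphism reverses 2-cells---so that the associativity and unit constraints of a $\V$-enriched bicategory are satisfied. Establishing these constraints from $\fy$ and the modification axiom $\ze$ (and its own coherence displayed in \autoref{defn:duality-involution}) is the technically heavy part; the remaining rungs are either cited coherence/strictification results or formal consequences of opposites being absolute colimits.
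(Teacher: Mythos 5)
Your proposal reproduces the paper's own proof essentially step for step: the same direct construction $\A\m(x,y)=\cA(x\o,y)$ with $\fy$ supplying weak opposites and $\ze$ feeding the pentagon (\cref{thm:bicat}), the same enriched coherence theorem (\cref{thm:bicat-coherence}), cocompletion under strict opposites using their absoluteness (\cref{thm:opposites-weaktostrict}), the dictionary back to strong duality involutions (\cref{thm:gm-2cat}, \cref{thm:contrav-enriched}), and the 2-monadic strictification (\cref{thm:2monad-coherence}). The one cosmetic difference is that the paper does not track duality-pseudofunctor data rung by rung---the intermediate stages are enriched categories carrying no involution, so that notion does not apply to them---but instead applies \cref{thm:biequiv} once at the end, converting the composite contravariance-preserving biequivalence between the two endpoint bicategories (each of which does carry an involution) into a duality pseudofunctor.
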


We could make this more algebraic by defining a whole tricategory of
bicategories with weak duality involution and showing that our
biequivalence lifts to an internal biequivalence therein, but we leave
that to the interested reader.  In fact, the correct definitions of
transformations and modifications can be extracted from our
characterization via enrichment.  (It does turn out that there is no
obvious way to define non-invertible modifications.)

\begin{rmk}
  The definition of duality involution may seem a little \textit{ad
    hoc}.  In \cref{sec:bicat-contra} we will rephrase it as a special
  case of a ``twisted group action'', which may make it seem more
  natural.
\end{rmk}

We end this section with some examples.

\begin{eg}
  With nearly any reasonable set-theoretic definition of ``category''
  and ``opposite'', the 2-category $\cCat$ of categories and functors
  has a strict duality involution.  The same is true for the
  2-category of categories enriched over any symmetric monoidal
  category, or the 2-category of categories internal to some category
  with pullbacks.
\end{eg}

\begin{eg}
  If \cA is a bicategory with a weak duality involution and \cK is a
  locally groupoidal bicategory, then the bicategory $[\cK,\cA]$ of
  pseudofunctors, pseudonatural transformations, and modifications
  inherits a weak duality involution by applying the duality
  involution of \cA pointwise.  Local groupoidalness of \cK ensures
  that $\cK\cong\cK\co$, so that we can define the dual of a
  pseudofunctor $F:\cK\to\cA$ to be
  \[F\o : \cK \cong \cK\co \xto{F\co} \cA\co \xto{(-)\o} \cA.\]
  The rest of the structure follows by whiskering.  If \cA is a
  2-category and its involution is strong or strict, the same is true
  for $[\cK,\cA]$.
\end{eg}

\begin{eg}
  If $\cA$ is a bicategory with a weak duality involution and
  $F:\cA\to\cC$ is a biequivalence, then $\cC$ can be given a weak
  duality involution making $F$ a duality pseudofunctor.  We first
  have to enhance $F$ to a biadjoint biequivalence as
  in~\cite{gurski:biequiv}; then we define all the structure by
  composing with $F$ and its inverse.
\end{eg}

\begin{eg}
  The 2-category of fibrations over a base category \bS has a strong
  duality involution constructed as follows.  Given a fibration
  $P:\bC\to\bS$, in its dual $P\o:\bC\o\to\bS$ the objects of $\bC\o$
  are those of \bC, while the morphisms from $x$ to $y$ over a
  morphism $f:a\to b$ in \bS are the morphisms $f^*y \to x$ over $a$
  in $\bC$.  Here $f^*y$ denotes the pullback of $y$ along $f$
  obtained from some cartesian lifting; the resulting ``set of
  morphisms from $x$ to $y$'' in $\bC\o$ is independent, up to
  canonical isomorphism, of the choice of cartesian lift.  However,
  there is no obvious way to define it such that $\bC\oo$ is
  \emph{equal} to $\bC$, rather than merely canonically isomorphic.
  Of course, the 2-category of fibrations over \bS is biequivalent to
  the 2-category of \bS-indexed categories, which has a strict duality
  involution induced from its codomain \cCat.
\end{eg}

\begin{eg}
  Let \cA be a bicategory with a duality involution, let \cW be a
  class of morphisms of \cA admitting a \emph{calculus of right
    fractions} in the sense of~\cite{pronk:bicat-frac}, and suppose
  moreover that if $v\in\cW$ then $v\o\in\cW$.  Then the bicategory of
  fractions \awinv also admits a duality involution, constructed using
  its universal property~\cite[Theorem 21]{pronk:bicat-frac} as
  follows.

  Let $\ell : \cA\to\awinv$ be the localization functor.  By
  assumption, the composite
  $\cA\xto{((-)\o)\co}\cA\co \xto{\ell\co} \awinv\co$ takes morphisms
  in \cW to equivalences.  Thus, it factors through $\ell$, up to
  equivalence, by a functor that we denote
  $((-)\ow)\co: \awinv \to \awinv\co$ (that is, a functor whose 2-cell
  dual we denote $(-)\ow$).
  Now the pasting composite composite
  \[\xymatrix{
      & && \cA \ar[dr]^\ell\\
      \cA \ar[dr]_{\ell} \ar[rr]_{((-)\o)\co} \ar@{=}@/^5mm/[urrr] \ar@{}[drrr]|{\Downarrow\simeq} \ar@{}[urrr]|{\Downarrow\fy} &&
      \cA\co \ar[dr]_{\ell\co} \ar[ur]_{(-)\o} \ar@{}[rr]|{\Downarrow\simeq} && \awinv\\
      & \awinv \ar[rr]_{((-)\ow)\co} && \awinv\co \ar[ur]_{(-)\ow}
    }\]
    is a pseudonatural equivalence from $\ell$ to
    $(-)\ow \circ ((-)\ow)\co \circ \ell$.  Hence, by the universal
    property of $\ell$, it is isomorphic to the whiskering by $\ell$
    of some pseudonatural equivalence
  \[ \xymatrix{
      \awinv \ar[dr]_{((-)\ow)\co} \ar@{=}[rr] && \awinv.\\
      & \awinv\co \ar[ur]_{(-)\ow} \ar@{}[u]|(.6){\Downarrow\fy'}
    }
  \]
  Similar whiskering arguments produce the modification $\zeta'$ and verify its axiom.

  Note that this induced duality involution on \awinv will not
  generally be strict, even if the one on \cA is.  Specifically, with
  careful choices we can make $(-)\ow$ strictly involutory on objects,
  1-cells, and 2-cells, but there is no obvious way to make it a
  strict 2-functor.  (On the other hand, as remarked in
  \cref{sec:introduction}, often \awinv is biequivalent to some
  naturally-occurring 2-category having a strict duality involution,
  such as the examples of \'etendues and stacks considered
  in~\cite{pronk:bicat-frac}.)

  A related special case is that if we work in an ambient set theory
  not assuming the axiom of choice, then we might take $\cA=\cCat$ and
  \cW the class of fully faithful and essentially surjective functors.
  In this case \awinv is equivalent to the bicategory of categories
  and \emph{anafunctors}~\cite{makkai:avoiding-choice,roberts:ana},
  which therefore inherits a weak duality involution.
\end{eg}

\begin{eg}
  Let \cB be a \emph{compact closed bicategory} (also called
  \emph{symmetric autonomous}) as
  in~\cite{ds:monbi-hopfagbd,stay:ccb}.  Thus means that \cB is
  symmetric monoidal, and moreover each object $x$ has a dual $x\o$
  with respect to the monoidal structure, with morphisms
  $\eta:\unit \to x\otimes x\o$ and $\ep : x\o\otimes x \to \unit$
  satisfying the triangle identities up to isomorphism.  If we choose
  such a dual for each object, then $(-)\o$ can be made into a
  biequivalence $\cB\op \to \cB$, sending a morphism $g:y\to x$ to the
  composite
  \begin{equation}
    \xymatrix{ x\o \ar[r]^-{\eta_y} & x\o\otimes y\otimes y\o \ar[r]^-{g} & x\o\otimes x \otimes y\o \ar[r]^-{\ep_x} & y\o},\label{eq:gop}
  \end{equation}
  with $\eta$ and $\ep$ becoming pseudonatural transformations.
  Moreover, this functor $\cB\op \to \cB$ looks exactly like a duality
  involution except that $(-)\co$ has been replaced by $(-)\op$: we
  have a pseudonatural adjoint equivalence
  \[ \xymatrix{
      \cB \ar[dr]_{((-)\o)\op} \ar@{=}[rr] && \cB.\\
      & \cB\op \ar[ur]_{(-)\o} \ar@{}[u]|(.6){\Downarrow\fy}
    }
  \]
  and an invertible modification
  \[ \vcenter{\xymatrix{
        \cB\op \ar[r]^{(-)\o} & \cB \ar[dr]_{((-)\o)\op} \ar@{=}[rr] && \cB\\
        && \cB\op \ar[ur]_{(-)\o} \ar@{}[u]|(.6){\Downarrow\fy}
      }}
    \quad\overset{\ze}{\Longrightarrow}\quad
    \vcenter{\xymatrix{
        \cB\op \ar[dr]_{(-)\o} \ar@{=}[rr] && \cB\op \ar[r]^{(-)\o} & \cB\\
        & \cB \ar[ur]_{((-)\o)\op} \ar@{}[u]|(.6){\Downarrow\fy\op}
      }}
  \]
  satisfying the same axiom as in \cref{defn:duality-involution}.
  Explicitly, $\fy$ is the composite
  \[ x \xto{x\otimes \eta_{x\o}} x\otimes x\o \otimes x\oo \toiso x\o \otimes x\otimes x\oo \xto{\ep_x\otimes x\oo} x\oo \]
  and $\zeta$ is obtained as a pasting composite
  \[
    \xymatrix{ & x\o x\oo x\ooo \ar[r]^{\sim} \ar[dd]  & x\oo x\o x\ooo \ar[dr]^{\ep_{x\o} x\ooo} \ar[dd] \\
      x\o \ar[ur]^{x\o\eta_{x\oo}} \ar[dr]_{(x \eta_{x\o})\o}
      \ar@{}[r]|{\Downarrow\cong} & \ar@{}[r]|{\Downarrow\cong}& \ar@{}[r]|{\Downarrow\cong} &
      x\ooo\\
      &(x x\o x\oo)\o \ar[r]_\sim & (x\o x x\oo)\o \ar[ur]_{(\ep_x x\oo)\o}
    }
  \]
  using the fact that if $x\o$ is a dual of $x$, then by symmetry of \cB, $x$ is a dual of $x\o$.

  Now let \cA be the locally full sub-bicategory of maps (left
  adjoints) in \cB.  Since passing from left to right adjoints
  reverses the direction of 2-cells, we have a ``take the right
  adjoint'' functor $\cA\coop \to \cB$, or equivalently
  $\cA\co \to \cB\op$.  Composing with the above ``duality'' functor
  $\cB\op\to\cB$, we have a functor $\cA\co \to\cB$, and since right
  adjoints in $\cB$ are left adjoints in $\cB\op$, this functor lands
  in \cA, giving $(-)\o:\cA\co\to\cA$.  Of course, equivalences are
  maps, so the above $\fy$ and $\zeta$ lie in \cA, and therefore equip
  $\cA$ with a duality involution.

  This duality involution on \cA is not generally strict or even
  strong.  However, as remarked in \cref{sec:introduction}, in many
  naturally-ocurring examples \cA is equivalent to some
  naturally-ocurring 2-category with a strict duality involution.  For
  instance, if $\cB=\cProf$ then $\cA\simeq \cCat_{\mathrm{cc}}$, the
  2-category of Cauchy-complete categories; while if $\cB=\cSpan$ then
  $\cA\simeq \bSet$, and similarly for internal and enriched versions.
\end{eg}

\section{A 2-monadic approach}
\label{sec:2-monadic-approach}

Let $T(\cA) = \cA + \cA\co$, an endo-2-functor of the 2-category
\twocat of 2-categories, strict 2-functors, and strict 2-natural
transformations.  We have an obvious strict 2-natural transformation
$\eta:\mathsf{Id}\to T$, and we define $\mu:TT\to T$ by
\[ (\cA + \cA\co) + (\cA + \cA\co)\co \toiso \cA + \cA\co + \cA\co + \cA \xto{\nabla} \cA + \cA\co \]
where $\nabla$ is the obvious ``fold'' map.

\begin{thm}\label{thm:2monad}
  $T$ is a strict 2-monad, and:
  \begin{enumerate}
  \item Normal pseudo $T$-algebras are 2-categories with strong duality involutions;\label{item:2m1}
  \item Pseudo $T$-morphisms are duality 2-functors; and\label{item:2m2}
  \item Strict $T$-algebras are 2-categories with strict duality involutions.\label{item:2m3}
  \end{enumerate}
\end{thm}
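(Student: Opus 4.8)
The plan is to organize everything around the observation that $T$ is the 2-monad associated to the strict action of the group $G=\dZ/2\dZ$ on \twocat by $(-)\co$. Writing $e,\si$ for the elements of $G$ and $g\cdot\cA$ for the action (so $e\cdot\cA=\cA$ and $\si\cdot\cA=\cA\co$), we have $T\cA=\coprod_{g\in G}g\cdot\cA$, and the monad data are induced by the group structure: $\eta$ is the coprojection of the $e$-summand, and $\mu$ identifies $TT\cA=\coprod_{g,h}g\cdot(h\cdot\cA)=\coprod_{g,h}(gh)\cdot\cA$ and folds along the multiplication $G\times G\to G$. To make this strict I would first record two facts about the action: that $(-)\co$ is a strict involution, $(\cA\co)\co=\cA$, and that it distributes strictly over coproducts, $(\cA+\cB)\co=\cA\co+\cB\co$, both holding on the nose for the standard reversal of hom-categories together with a strictly associative and unital choice of coproduct. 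Granting these, the isomorphism $\toiso$ in the definition of $\mu$ is a literal reindexing of summands, and the monad axioms reduce to the (trivial) unit and associativity axioms of $G$. This establishes that $T$ is a strict 2-monad.

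For the algebra parts I would decompose an action $a:\cA+\cA\co\to\cA$ into its two legs $a_+:=a|_\cA$ and $a_-:=a|_{\cA\co}$ via the universal property of the coproduct. The unit axiom $a\circ\eta=\mathrm{id}_\cA$ (or, in the weak case, its normal analogue) says exactly that $a_+=\mathrm{id}_\cA$, so the only genuine datum is $a_-$, which I rename $(-)\o:\cA\co\to\cA$; since $a$ is a $1$-cell of \twocat this is automatically a strict $2$-functor. For \autoref{item:2m3} I would unwind the associativity axiom $a\circ\mu=a\circ Ta$ over the four summands of $TT\cA\cong\cA+\cA\co+\cA\co+\cA$ (indexed by the paths $ee,e\si,\si e,\si\si$): the three summands meeting a unit contribute only the tautologies $\mathrm{id}=\mathrm{id}$ and $(-)\o=(-)\o$, while the $\si\si$-summand contributes the single nontrivial equation $(-)\o\circ((-)\o)\co=\mathrm{id}_\cA$. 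Applying $(-)\co$ to this identity yields the reverse composite, so $(-)\o$ is a strict self-inverse $2$-functor, i.e.\ a strict duality involution; conversely any such visibly assembles into a strict $T$-algebra.

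Statements \autoref{item:2m1} and \autoref{item:2m2} run on the same bookkeeping with the equalities upgraded to coherence isomorphisms. For \autoref{item:2m1}, the associativity constraint $a\circ\mu\toiso a\circ Ta$ of a normal pseudo $T$-algebra has one component per summand; the unit coherence axioms together with normality force the three components meeting a unit to be identities, leaving a single invertible $2$-cell on the $\si\si$-summand, which is precisely the strict $2$-natural isomorphism $\fy:\mathrm{id}\toiso(-)\oo$ of a strong duality involution (here $\ze$ is automatically an identity because $(-)\o$ is strict). For \autoref{item:2m2}, a pseudo $T$-morphism $F$ carries a constraint $b\circ TF\toiso F\circ a$ with two components; normality kills the $\cA$-component, and the $\cA\co$-component is exactly the comparison $\fii_x:(Fx)\o\toiso F(x\o)$ of a duality $2$-functor.

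The main obstacle I expect is the final matching step: checking that the pentagon-type associativity coherence of the pseudo-algebra, evaluated on the eight summands of $TTT\cA$, reproduces exactly the cubical axiom on $\fy$ in \autoref{defn:duality-involution}, and likewise that the pseudo-morphism coherence reproduces the axiom relating $\fii$ and $\th$ in \autoref{defn:duality-functor}. This is pure diagram bookkeeping, but tracking where the $(-)\co$'s and the distributivity reindexings sit across the summands—and confirming that only the all-$\si$ summand carries content while every summand touching $e$ collapses—is the delicate part; everything else is mechanical.
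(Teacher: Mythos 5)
Your route is the same as the paper's (decompose the action and its constraint data over the summands of $T\cA$, $TT\cA$, $TTT\cA$, with the unit coherence axioms plus normality killing every component that meets the unit summand), and most of the bookkeeping is right; but there is a genuine error at exactly the step you flag as delicate. Your parenthetical claim that ``$\ze$ is automatically an identity because $(-)\o$ is strict'' is false. The condition $\ze=\mathrm{id}$ is the equation $\fy_{x\o}=(\fy_x)\o$, and strictness of $(-)\o$ together with strict $2$-naturality of $\fy$ does not imply it: strict naturality of $\fy$ at the $1$-cell $\fy_x$ yields $\fy_{x\oo}=(\fy_x)\oo$, which relates different components and is not the equation in question. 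Indeed, $\ze=\mathrm{id}$ is listed as a separate clause in the definition of a strong duality involution (\cref{defn:duality-involution}) precisely because it is not automatic. Correspondingly, your proposed final matching step aims at the wrong target: in the strong setting the displayed axiom on $\fy$ in \cref{defn:duality-involution} is vacuous (once everything is strict, both sides are identity $2$-cells of the same $1$-cell), so if the pseudo-algebra pentagon reproduced only that axiom, the pentagon would be content-free and normal pseudo $T$-algebras would come out \emph{strictly weaker} than strong duality involutions --- the clause $\ze=\mathrm{id}$ would be unaccounted for, and your converse direction (``any such visibly assembles into an algebra'') would have no way to verify the pentagon. What actually happens, and what the paper's proof asserts, is that the pentagon evaluated on the all-$\si$ summand of $TTT\cA$ is precisely the identity $\fy_{x\o}=(\fy_x)\o$, i.e.\ the statement that $\ze$ is an identity; the seven summands meeting $e$ collapse as you say.

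A smaller slip of the same kind occurs in part (ii): pseudo $T$-morphisms carry no normality clause of their own, so ``normality kills the $\cA$-component'' is not a proof step. It is the unit coherence axiom for the morphism (combined with normality of the two algebras) that forces the constraint to be the identity on the first summand; and the remaining coherence axiom of the pseudo-morphism is then exactly the condition $\th=\mathrm{id}$ of \cref{defn:duality-functor} --- again a nontrivial equation on $\fii$ and $\fy$, not something that holds automatically or that merely ``reproduces'' an axiom already implied by strictness. With these two identifications corrected, your summand-by-summand argument does go through and coincides with the paper's proof.
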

\begin{proof}
  The 2-monad laws for $T$ are straightforward to check.  By a normal
  pseudo algebra we mean one for which the unit constraint identifying
  $\cA\to T\cA \to \cA$ with the identity map is itself an identity.
  Thus, when $T\cA = \cA +\cA\co$, this means the action
  $a:T\cA\to\cA$ contains no data beyond a 2-functor
  $(-)\o:\cA\co\to\cA$.  The remaining data is a 2-natural isomorphism
  \begin{equation}
    \vcenter{\xymatrix{
        TT\cA\ar[r]^{Ta}\ar[d]_{\mu} \drtwocell\omit{\cong} &
        T\cA\ar[d]^{a}\\
        T\cA\ar[r]_{a} &
        \cA
      }}
    \quad\text{that is}\quad
    \vcenter{\xymatrix{
        \cA+\cA\co+\cA\co+\cA\ar[r]\ar[d] \drtwocell\omit{\cong} &
        \cA+\cA\co\ar[d]\\
        \cA+\cA\co\ar[r] &
        \cA
      }}
  \end{equation}
  satisfying three axioms that can be found, for instance,
  in~\cite[\S1]{lack:codescent-coh}.  The right-hand square commutes
  strictly on the first three summands in its domain, and the second
  and third of the coherence axioms say exactly that the given
  isomorphism in these cases is an identity.  Thus, what remains is
  the component of the isomorphism on the fourth summand, which has
  precisely the form of $\fy$ in \cref{defn:duality-involution}, and
  it is easy to check that the first coherence axiom reduces in this
  case to the identity $\zeta$.  This proves~\ref{item:2m1},
  and~\ref{item:2m3} follows immediately.

  Similarly, for~\ref{item:2m2}, a pseudo $T$-morphism is a 2-functor
  $F:\cA\to\cB$ together with a 2-natural isomorphism
  \begin{equation}
    \vcenter{\xymatrix{
        T\cA\ar[r]^{T F}\ar[d] \drtwocell\omit{\cong} &
        T\cB\ar[d]\\
        \cA\ar[r]_F &
        \cB
      }}
    \quad\text{that is}\quad
    \vcenter{\xymatrix{
        \cA+\cA\co\ar[r]^{F+F\co}\ar[d] \drtwocell\omit{\cong} &
        \cB+\cA\co\ar[d]\\
        \cA\ar[r]_F &
        \cB
      }}
  \end{equation}
  satisfying two coherence axioms also listed
  in~\cite[\S1]{lack:codescent-coh}.  This square commutes strictly on
  the first summand of its domain, and the second coherence axiom
  ensures that the isomorphism is the identity there.  So the
  remaining data is the isomorphism on the second summand, which has
  precisely the form of $\fii$ in \cref{defn:duality-functor}, and the
  first coherence axiom reduces to the identity $\theta$.
\end{proof}

In particular, we obtain an automatic definition of a ``duality
2-natural transformation'': a $T$-2-cell between pseudo $T$-morphisms.
This also gives us another source of examples.

\begin{eg}
  The 2-category $T\algs$ of strict $T$-algebras and strict
  $T$-morphisms is complete with limits created in $\twocat$,
  including in particular Eilenberg--Moore objects~\cite{street:ftm}.
  Thus, for any monad $M$ in this 2-category --- which is to say, a
  2-monad that is a strict duality 2-functor and whose unit and
  multiplication are duality 2-natural transformations --- the
  2-category $M\algs$ of strict $M$-algebras and strict $M$-morphisms
  is again a strict $T$-algebra, i.e.\ has a strict duality
  involution.

  Similarly, by~\cite{bkp:2dmonads} the 2-category $T\alg$ of strict
  $T$-algebras and pseudo $T$-morphisms has PIE-limits, including
  EM-objects.  Thus, we can reach the same conclusion even if $M$ is
  only a strong duality 2-functor.  And since \twocat is locally
  presentable and $T$ has a rank, there is another 2-monad $T'$ whose
  strict algebras are the pseudo $T$-algebras; thus we can argue
  similarly in the 2-category $T\psalg$ of pseudo $T$-algebras and
  pseudo $T$-morphisms, so that strong duality involutions also lift
  to $M\algs$.

  Usually, of course, we are more interested in the 2-category $M\alg$
  of strict $M$-algebras and \emph{pseudo} $M$-morphisms.  It might be
  possible to enhance the above abstract argument to apply to this
  case using techniques such as~\cite{lack:psmonads,power:3dmonads},
  but it is easy enough to check directly that if $M$ lies in $T\alg$
  or $T\psalg$, then so does $M\alg$.  If $(A,a)$ is an $M$-algebra,
  then the induced $M$-algebra structure on $A\o$ is the composite
  \[ M(A\o) \xto{\fii} (MA)\o \xto{a\o} A\o \]
  and if $(f,\fbar):(A,a) \to (B,b)$ is a pseudo $M$-morphism (where
  $\fbar : a\circ Mf \toiso f \circ b$), then $f\o$ becomes a pseudo
  $M$-morphism with the following structure 2-cell:
  \begin{equation}
    \vcenter{\xymatrix{
        M(B\o)\ar[r]^{\fii} &
        (MB)\o\ar[rr]^{b\o} &&
        B\o\\
        M(A\o)\ar[r]_{\fii}\ar[u]^{M(f\o)} \urtwocell\omit{\cong} &
        (MA)\o\ar[rr]_{a\o}\ar[u]|{(Mf)\o} \urrtwocell\omit{\mathrlap{(\fbar^{-1})\o}} &&
        A\o\ar[u]_{f\o}
      }}
  \end{equation}
  The axiom $\theta$ of $\fii$ (which is an equality since $M$ is a
  strong duality 2-functor) ensures that $\fy$ lifts to $M\alg$
  (indeed, to $M\algs$), and its own $\theta$ axiom is automatic.  A
  similar argument applies to $M\psalg$.  Thus, 2-categories of
  algebraically structured categories such as monoidal categories,
  braided or symmetric monoidal categories, and so on, admit strict
  duality involutions, even when their morphisms are of the pseudo
  sort.  (Of course, this is impossible for lax or colax morphisms,
  since dualizing the categories involved would switch lax with
  colax.)

  In theory, this could be another source of weak duality involutions
  that are not strong: if for a 2-monad $M$ the transformation $\fii$
  were not a strictly 2-natural isomorphism or its axiom $\theta$ were
  not strict, then $M\alg$ would only inherit a weak duality
  involution, even if the duality involution on the original
  2-category were strict.  However, I do not know any examples of
  2-monads that behave this way.
\end{eg}

We end this section with the strong-to-strict coherence theorem.

\begin{thm}\label{thm:2monad-coherence}
  If \cA is a 2-category with a strong duality involution, then there
  is a 2-category $\cA'$ with a strict duality involution and a
  duality 2-functor $\cA\to\cA'$ that is a 2-equivalence.
\end{thm}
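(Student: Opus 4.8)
The plan is to deduce this from the general strictification theorem for pseudo-algebras over an accessible 2-monad, using the dictionary established in \cref{thm:2monad}. By that theorem, a 2-category with a strong duality involution is precisely a normal pseudo $T$-algebra, a 2-category with a strict duality involution is a strict $T$-algebra, and a duality 2-functor is a pseudo $T$-morphism. Thus it suffices to show that the normal pseudo $T$-algebra determined by the strong duality involution on \cA is equivalent, via an equivalence that is a pseudo $T$-morphism, to a strict $T$-algebra; unwinding the dictionary then yields the claimed $\cA'$ and duality 2-functor $\cA\to\cA'$.

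First I would check the hypotheses under which such a strictification is available. The 2-category \twocat is locally finitely presentable (in the $\bCat$-enriched sense), and the 2-monad $T$ has a rank: the identity 2-functor preserves all colimits, the 2-functor $(-)\co$ is an isomorphism of 2-categories and so preserves all colimits, and since filtered colimits commute with binary coproducts it follows that $T\cA = \cA + \cA\co$ preserves filtered colimits. Hence $T$ lies within the scope of the coherence theorems of~\cite{bkp:2dmonads,power:coherence,lack:codescent-coh}.

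Next I would invoke the coherence theorem itself. For a 2-monad of this kind, every pseudo $T$-algebra $\cA$ admits a strict $T$-algebra $\cA'$ together with a pseudo $T$-morphism $\cA\to\cA'$ that is an equivalence in the 2-category $T\psalg$ of pseudo $T$-algebras and pseudo $T$-morphisms; concretely $\cA'$ may be realized as the codescent object of a canonical resolution of $\cA$, as in~\cite{lack:codescent-coh}. Applying this to the normal pseudo $T$-algebra coming from the strong duality involution produces the desired $\cA'$, which by \cref{thm:2monad}\ref{item:2m3} is a 2-category with a strict duality involution, and a comparison $\cA\to\cA'$ which by \cref{thm:2monad}\ref{item:2m2} is a duality 2-functor.

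The one point genuinely requiring care is that this comparison is a 2-equivalence of the underlying 2-categories, and not merely an equivalence in some weaker sense. This is where the precise form of the cited coherence theorems is essential: they produce an \emph{internal} equivalence in $T\psalg$, and the forgetful 2-functor $T\psalg\to\twocat$ preserves internal equivalences, so the underlying strict 2-functor of $\cA\to\cA'$ is automatically a 2-equivalence. (Normality of the starting pseudo-algebra plays no role here, since the coherence theorems apply to arbitrary pseudo $T$-algebras, of which the normal ones are a special case.) I expect the remainder to be routine bookkeeping in translating the structure 2-cells of the pseudo $T$-morphism into the data $\fii$ and $\theta$ of a duality 2-functor.
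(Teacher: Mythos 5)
There is a genuine gap at the central step. You justify strictification solely by accessibility: \twocat is locally presentable and $T$ has a rank, and you then assert that ``for a 2-monad of this kind'' every pseudo $T$-algebra admits a strict $T$-algebra $\cA'$ with a comparison pseudo $T$-morphism that is an equivalence. That general principle is false. For an accessible 2-monad on a locally presentable base, the codescent-object construction of~\cite{lack:codescent-coh} produces only a \emph{left adjoint} to the inclusion $T\algs\hookrightarrow T\psalg$, i.e.\ a strict replacement together with a comparison pseudomorphism; nothing about rank forces that comparison to be an equivalence. Indeed, this paper's own bibliography contains the counterexample: \cite{shulman:psalg} exhibits an accessible 2-monad on a locally presentable 2-category with a pseudoalgebra not equivalent to any strict one. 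So the hypothesis you carefully verify (preservation of filtered colimits) is not the one that carries the theorem, and the point you defer as ``routine bookkeeping'' sits downstream of an unproved --- in general unprovable --- claim.

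What is needed, and what the paper actually checks, is the factorization-system form of the coherence theorem: \cite[Theorem 4.10]{lack:codescent-coh}, the abstract version of \cite[Theorem 3.4]{power:coherence}. The 2-category \twocat carries the factorization system $(\cE,\cM)$ with $\cE$ the bijective-on-objects 2-functors and $\cM$ the 2-fully-faithful ones, and this system satisfies the hypotheses of that theorem; the decisive additional condition --- absent from your argument and not implied by rank --- is $T\cE\subseteq\cE$, which holds for $T\cA=\cA+\cA\co$ because both $(-)\co$ and binary coproducts preserve bijectivity on objects. One then factors the pseudo-action $T\cA\to\cA$ as a bijective-on-objects 2-functor followed by a 2-fully-faithful one; the intermediate object $\cA'$ (concretely, two copies of the objects of \cA interchanged by a now-strict involution) is a strict $T$-algebra, and the 2-fully-faithful leg $\cA'\to\cA$ is a 2-equivalence. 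Your surrounding scaffolding is fine and matches the paper --- the dictionary via \cref{thm:2monad}, the observation that the forgetful 2-functor preserves internal equivalences, and the remark that normality is harmless --- so the proof is repairable for this particular $T$; but as written, its key step rests on a false general statement, and the repair consists precisely in replacing the appeal to rank by the verification that $T$ preserves bijective-on-objects 2-functors.
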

\begin{proof}
  The 2-category $\twocat$ admits a factorization system $(\cE,\cM)$
  in which $\cE$ consists of the 2-functors that are bijective on
  objects and $\cM$ of the 2-functors that are 2-fully-faithful, i.e.\
  an isomorphism on hom-categories.  Moreover, this factorization
  system satisfies the assumptions of~\cite[Theorem
  4.10]{lack:codescent-coh}, and we have $T\cE\subseteq \cE$.
  Thus,~\cite[Theorem 4.10]{lack:codescent-coh} (which is an abstract
  version of~\cite[Theorem 3.4]{power:coherence}), together with the
  characterizations of \cref{thm:2monad}, implies the desired result.
\end{proof}

Inspecting the proof of the general coherence theorem, we obtain a
concrete construction of $\cA'$: it is the result of factoring the
pseudo-action map $T\cA \to \cA$ as a bijective-on-objects 2-functor
followed by a 2-fully-faithful one.  In other words, the objects of
$\cA'$ are two copies of the objects of $\cA$, one copy representing
each object and one its opposite, with the duality interchanging them.
The morphisms and 2-cells are then easy to determine.

It remains, therefore, to pass from a weak duality involution on a
bicategory to a strong one on a 2-category.  We proceed up the
right-hand side of the ladder from \cref{sec:introduction}.

\section{Contravariance through virtualization}
\label{sec:genmulti}

As mentioned in \cref{sec:introduction}, for much of the paper we will
work in the extra generality of ``twisted group actions''.
Specifically, let \bW be a complete and cocomplete closed symmetric
monoidal category, and let $G$ be a group that acts on \bW by strong
symmetric monoidal functors.  We write the action of $g\in G$ on
$W\in \bW$ as $\act g W$.  For simplicity, we suppose that the action
is strict, i.e.\ $\pact h {\act g W} = \act{gh}{W}$ and $\act 1 W = W$
strictly (and symmetric-monoidal-functorially).

\begin{eg}\label{eg:op-as-gw}
  The case we are most interested in, which will yield our theorems
  about duality involutions on 2-categories, is when $\bW=\bCat$ with
  $G$ the 2-element group $\{+,-\}$ with $+$ the identity element (a
  copy of $\dZ/2\dZ$), and $\act{-}{A}=A\op$.
\end{eg}

However, there are other examples as well.  Here are a few, also
mentioned in \cref{sec:introduction}, that yield ``duality
involutions'' with a similar flavor.

\begin{eg}
  Let $\bW=\mathbf{2}\text{-}\mathbf{Cat}$, with $G$ as the 4-element
  group $\{++,-+,+-,--\}$ (a copy of $\dZ/2\dZ\times \dZ/2\dZ$), and
  $\act{-+}{A}=A\op$, $\act{+-}{A}=A\co$, and hence
  $\act{--}{A}=A\coop$.  If we give $\bW$ the Gray monoidal structure
  as in~\cite{gps:tricats}, this example leads to a theory of duality
  involutions on Gray-categories.
\end{eg}

\begin{eg}
  Let $\bW$ be the category of strict $n$-categories, with
  $G = (\dZ/2\dZ)^n$ acting by reversal of $k$-morphisms at all
  levels.  Since a category enriched in strict $n$-categories is
  exactly a strict $(n+1)$-category, we obtain a theory of duality
  involutions on strict $(n+1)$-categories.
\end{eg}

\begin{eg}
  Let $\bW=\mathbf{sSet}$, the category of simplicial sets, with
  $G=\{+,-\}$, and $\act{-}{A}$ obtained by reversing the directions
  of all simplices in $A$.  This leads to a theory of duality
  involutions on simplicially enriched categories that is appropriate
  when the simplicial sets are regarded as modeling
  $(\infty,1)$-categories as
  quasicategories~\cite{joyal:q_kan,lurie:higher-topoi}, so that
  simplicially enriched categories are a model for
  $(\infty,2)$-categories.  For example, such simplicially enriched
  categories are used in~\cite{rv:fibyon-oocosmos} to define a notion
  of ``$\infty$-cosmos'' analogous to the ``fibrational cosmoi''
  of~\cite{street:fib-yoneda-2cat}, so such duality involutions could
  be a first step towards an $\infty$-version
  of~\cite{weber:2toposes}.
\end{eg}

\begin{eg}
  Combining the ideas of the last two examples, if $\bW$ is the
  category of $\Theta_n$-spaces as in~\cite{rezk:theta}, then
  $(\dZ/2\dZ)^n$ acts on it by reversing directions at all dimensions.
  Thus, we obtain a theory of duality involutions on categories
  enriched in $\Theta_n$-spaces, which in~\cite{br:cmp-infn-i} were
  shown to be a model of $(\infty,n+1)$-categories.
\end{eg}

Note that we do \emph{not} assume the action of $G$ on \bW is by
\bW-\emph{enriched} functors, since in most of the above examples this
is not the case.  In particular, $(-)\op$ is not a 2-functor.  We also
note that most or all of the theory would probably be the same if $G$
were a 2-group rather than just a group, but we do not need this extra
generality.

Since the action of $G$ on \bW is symmetric monoidal, it extends to an
action on \wcat applied homwise, which we also write $\act{g}{\cA}$,
i.e.\ $\act{g}{\cA}(x,y) = \pact{g}{\cA(x,y)}$.  In our motivating
example \ref{eg:op-as-gw} we have $\act{-}{\cA}= \cA\co$ for a
2-category \cA.  We now define a 2-monad $T$ on $\wcat$ by
\[ T\cA = \sum_{g\in G} \act{g}{\cA}. \]
The unit $\cA \to T\cA$ includes the summand indexed by $1\in G$, and
the multiplication uses the fact that each action, being an
equivalence of categories (indeed, an isomorphism of categories), is
cocontinuous:
\begin{equation}
  TT\cA = \sum_{g\in G} \pact{g}{T\cA}
  = \sum_{g\in G} \act{g}{\left(\sum_{h\in G} \act{h}{\cA}\right)}
  \cong \sum_{g\in G} \sum_{h\in G} \pact{g}{\act{h}{\cA}}
  \cong \sum_{g\in G} \sum_{h\in G} \act{hg}{\cA}
\end{equation}
which we can map into $T\cA$ by sending the $(g,h)$ summand to the $hg$-summand.

We will refer to a normal pseudo $T$-algebra structure as a
\textbf{twisted $G$-action}; it equips a \bW-category \cA with actions
$\pact{g}{-} : \act{g}{\cA}\to\cA$ that are suitably associative up to
coherent isomorphism (with $\pact{1}{x}=x$ strictly).  In our
motivating example of $\bW=\bCat$ and $G=\{+,-\}$, the monad $T$
agrees with the one we constructed in \cref{sec:2-monadic-approach};
thus twisted $G$-actions are strong duality involutions (and likewise
for their morphisms and 2-cells).

\begin{eg}
  If we write $[x,y]$ for the internal-hom of \bW, then we have maps
  $\act{g}{[x,y]} \to [\act g x, \act g y]$ obtained by adjunction
  from the composite
  \[ \act{g}{[x,y]} \otimes \act g x \toiso \pact g{[x,y]\otimes x}
  \to \act g y\]
  Since the $[x,y]$ are the hom-objects of the \bW-category \bW, these
  actions assemble into a \bW-functor
  $\pact{g}{-}:\act g \bW \to \bW$, and as $g$ varies they give \bW
  itself a twisted $G$-action.  (Thus, among the three different
  actions we are denoting by $\pact{g}{-}$ --- the given one on \bW,
  the induced one on \wcat, and an arbitrary twisted $G$-action ---
  the first is a special case of the third.)  In particular, we obtain
  in this way the canonical strong (in fact, strict) duality
  involution on \bCat.
\end{eg}

Now we note that $T$ extends to a normal monad in the sense
of~\cite{cs:multicats} on the proarrow equipment \wprof, as follows.
As in~\cite{shulman:frbi,cs:multicats}, we view equipments as pseudo
double categories satisfying with a ``fibrancy'' condition saying that
horizontal arrows (the ``proarrow'' direction, for us) can be pulled
back universally along vertical ones (the ``functor'' direction).  In
\wprof the objects are \bW-categories, a horizontal arrow
$\cA\hto \cB$ is a profunctor (i.e.\ a \bW-functor
$\cB\op\ten\cA \to \bW$), a vertical arrow $\cA\to\cB$ is a
\bW-functor, and a square
\begin{equation}
  \vcenter{\xymatrix{
      \cA\ar[r]|{|}^M\ar[d]_F \ar@{}[dr]|{\Downarrow} &
      \cB\ar[d]^G\\
      \cC\ar[r]|{|}_N &
      \cD
      }}
\end{equation}
is a \bW-natural transformation $M(b,a)\to N(G(b),F(a))$.  A monad on
an equipment is strictly functorial in the vertical direction, laxly
functorial in the horizontal direction, and its multiplication and
unit transformations consist of vertical arrows and squares.

In our case, we already have the action of $T$ on \bW-categories and
\bW-functors.  A \bW-profunctor $M:\cA \hto \cB$ induces another one
$\act{g}{M} : \act g \cA \hto \act g \cB$ by applying the $G$-action
objectwise, and by summing up over $g$ we have
$T M : T \cA \hto T \cB$.  This is in fact pseudofunctorial on
profunctors.  Finally, the unit and multiplication are already defined
as vertical arrows, and extend to squares in an evident way:
\begin{equation}
  \vcenter{\xymatrix{
      \cA\ar[r]|{|}^M\ar[d]_\eta \ar@{}[dr]|{\Downarrow} &
      \cB\ar[d]^\eta\\
      T\cA\ar[r]|{|}_{T M} &
      T\cB
    }}\qquad
  \vcenter{\xymatrix{
      TT\cA\ar[r]|{|}^{TTM}\ar[d]_\mu \ar@{}[dr]|{\Downarrow} &
      TT\cB\ar[d]^\mu\\
      T\cA\ar[r]|{|}_{T M} &
      T\cB
      }}
\end{equation}

Since we have a monad on an equipment, we can define
``$T$-multicategories'' in \wprof, which following~\cite{cs:multicats}
we call \emph{virtual $T$-algebras}.  For our specific monad $T$, we
will refer to virtual $T$-algebras as \textbf{$G$-variant
  \bW-categories}.  Such a gadget is a \bW-category \cA together with
a profunctor $\A:\cA \hto T\cA$, a unit isomorphism
$\cA(x,y) \toiso \A(\eta(x),y)$, and a composition
\begin{equation}
  \vcenter{\xymatrix{
      \cA \ar[r]|{|}^\A\ar@{=}[d] \ar@{}[drr]|{\Downarrow} &
      T\cA \ar[r]|{|}^{T\A} &
      TT\cA\ar[d]^\mu\\
      \cA\ar[rr]|{|}_\A & &
      T\cA
      }}
\end{equation}
satisfying associativity and unit axioms.  If we unravel this
explicitly, we see that a $G$-variant \bW-category has a set of
objects along with, for each pair of objects $x,y$ and each $g\in G$,
a hom-object $\A^g(x,y)\in \bW$, plus units $\unit \to \A^1(x,x)$ and
compositions
\[\A^g(y,z) \otimes \pact{g}{\A^h(x,y)} \to \A^{hg}(x,z)\]
satisfying the expected axioms.  (Technically, in addition to the
hom-objects $\A^1(x,y)$ it has the hom-objects $\cA(x,y)$ that are
isomorphic to them, but we may ignore this duplication of data.)  We
may refer to the elements of $\A^g(x,y)$ as \textbf{$g$-variant
  morphisms}.  The rule for the variance of composites is easier to
remember when written in diagrammatic order: if we denote
$\al\in \A^g(x,y)$ by $\al:x\tovar{g}y$, then the composite of
$x\tovar{g} y \tovar{h} z$ is $x\tovar{gh} z$.  (Of course, in our
motivating example $G$ is commutative, so the order makes no
difference.)

In the specific case of $G=\{+,-\}$ acting on $\bCat$, we can unravel
the definition more explicitly as follows.
\begin{defn}\label{defn:2cat-contra}
  A \textbf{2-category with contravariance} is a $G$-variant
  \bW-category for $\bW=\bCat$ and $G=\{+,-\}$.  Thus it consists of
  \begin{itemize}[leftmargin=2em]
  \item A collection $\ob\A$ of objects;
  \item For each $x,y\in\ob\A$, a pair of categories $\A\p(x,y)$ and $\A\m(x,y)$;
  \item For each $x\in\ob\A$, an object $1_x \in \A\p(x,x)$;
  \item For each $x,y,z\in\ob\A$, composition functors
    \begin{align*}
      \A\p(y,z) \times \A\p(x,y) &\too \A\p(x,z)\\
      \A\m(y,z) \times \A\m(x,y)\op &\too \A\p(x,z)\\
      \A\p(y,z) \times \A\m(x,y) &\too \A\m(x,z)\\
      \A\m(y,z) \times \A\p(x,y)\op &\too \A\m(x,z);
    \end{align*}
  \end{itemize}
  such that
  \begin{itemize}[leftmargin=2em]
  \item Four ($2\cdot 2^1$) unitality diagrams commute; and
  \item Eight ($2^3$) associativity diagrams commute.
  \end{itemize}
\end{defn}
Like any kind of generalized multicategory, $G$-variant \bW-categories
form a 2-category.  We leave it to the reader to write out explicitly
what the morphisms and 2-cells in this 2-category look like; in our
example of interest we will call them \textbf{2-functors preserving
  contravariance} and \textbf{2-natural transformations respecting
  contravariance}.

Now, according to~\cite[Theorem 9.2]{cs:multicats}, any twisted
$G$-action $a:T\cA\to \cA$ gives rise to a $G$-variant \bW-category
with $\A = \cA(a,1)$, which in our situation means
$\A^g(x,y) = \cA(\act g x,y)$ (where $\act g x$ refers, as before, to
the $g$-component of the action $a:T\cA\to \cA$).  In particular, any
2-category with a strong duality involution can be regarded as a
2-category with contravariance, where $\A\p(x,y) = \cA(x,y)$ and
$\A\m(x,y) = \cA(x\o,y)$.

Moreover, by~\cite[Corollary 9.4]{cs:multicats}, a $G$-variant
\bW-category \A arises from a twisted $G$-action exactly when
\begin{enumerate}
\item The profunctor $\A : \cA \hto T\cA$ is representable by some $a:T\cA\to\cA$, and\label{item:rep1}
\item The induced 2-cell $\overline{a} : a\circ \mu \to a \circ Ta$ is an isomorphism.\label{item:rep2}
\end{enumerate}
Condition~\ref{item:rep1} means that for every $x\in \A$ and every
$g\in G$, there is an object ``$\act g x$'' and an isomorphism
$\A^g(x,y) \cong \A^1(\act g x,y)$, natural in $y$.  The Yoneda lemma
implies this isomorphism is mediated by a ``universal $g$-variant
morphism'' $\chi_{g,x} \in \A^g(x,\act g x)$.

Condition~\ref{item:rep2} then means that for any $x\in \A$ and
$g,h\in G$, the induced map
$\psi_{h,g,x}:\act{gh}{x} \to \pact{h}{\act g x}$ is an isomorphism.
(This map arises by composing $\chi_{g,x} \in \A^g(x,\act g x)$ with
$\chi_{h,\act g x} \in \A^h(\act g x,\pact{h}{\act g x})$ to obtain a
map in $\A^{gh}(x,\pact{h}{\act g x})$, then applying the defining
isomorphism of $\act{gh}{x}$.)  As usual for generalized
multicategories, this is equivalent to requiring a stronger universal
property of $\act g x$: that precomposing with $\chi_{g,x}$ induces
isomorphisms
\begin{equation}
  \A^h(\act g x,y) \toiso \A^{gh}(x,y)\label{eq:gvariator}
\end{equation}
for all $h\in G$.  (This again is more mnemonic in diagrammatic
notation: any arrow $x \tovar{gh} y$ factors uniquely through
$\chi_{g,x}$ by a morphism $\act g x \tovar{h} y$, i.e.\ variances on
the arrow can be moved into the action on the domain, preserving
order.)  This is because the following diagram commutes by definition
of $\psi_{h,g,x}$, and the vertical maps are isomorphisms by
definition of $\chi$:
\begin{equation}
  \vcenter{\xymatrix@C=4pc{
      \A^{gh}(x,y)\ar@{<-}[r]^{-\circ \chi_{g,x}}\ar@{<-}[d]_{-\circ \chi_{gh,x}} &
      \A^h(\act g x,y)\ar@{<-}[d]^{-\circ \chi_{h,\act g x}}\\
      \A^1(\act{gh}x,y)\ar@{<-}[r]_{-\circ \psi_{h,g,x}} &
      \A^1(\pact h{\act g x},y)
      }}
\end{equation}
If $\act g x$ is an object equipped with a morphism
$\chi_{g,x} \in \A^g(x,\act g x)$ satisfying this stronger universal
property~\eqref{eq:gvariator}, we will call it a \textbf{$g$-variator}
of $x$.  In our motivating example $\bW=\bCat$ with $g=-$, we call a
$-$-variator an \textbf{opposite}.  Explicitly, this means the
following.

\begin{defn}\label{defn:opposite}
  In a 2-category with contravariance \A, a \textbf{(strict) opposite}
  of an object $x$ is an object $x\o$ equipped with a contravariant
  morphism $\chi_x\in \A^-(x,x\o)$ such that precomposing with
  $\chi_x$ induces isomorphisms of hom-categories for all $y$:
  \begin{align*}
    \A^+(x\o,y) &\toiso \A^-(x,y)\\
    \A^-(x\o,y) &\toiso \A^+(x,y).
  \end{align*}
\end{defn}

In fact, $g$-variators can also be characterized more explicitly.  The
second universal property of $\chi_{g,x}\in \act g \A(x,\act g x)$
means in particular that the identity $1_x\in \act 1 \A(x,x)$ can be
written as $\xi_{g,x} \circ \chi_{g,x}$ for a unique
$\xi_{g,x} \in \A^{g^{-1}}(\act g x,x)$.  (This is the first place
where we have used the fact that $G$ is a group rather than just a
monoid.)  Moreover, since
\[(\chi_{g,x}\circ \xi_{g,x}) \circ \chi_{g,x} = \chi_{g,x} \circ
(\xi_{g,x}\circ \chi_{g,x}) = \chi_{g,x} \]
it follows by the first universal property of $\chi_{g,x}$ that
$\chi_{g,x}\circ \xi_{g,x} = 1_{\act g x}$ as well.  Thus,
$\chi_{g,x}$ and $\xi_{g,x}$ form a ``$g$-variant isomorphism''
between $x$ and $\act g x$.

On the other hand, it is easy to check that any such $g$-variant
isomorphism between $x$ and an object $y$ makes $y$ into a
$g$-variator of $x$.  Thus, we have:

\begin{prop}\label{thm:gm-abs}
  Any $g$-variant \bW-functor $F:\A\to\B$ preserves $g$-variators.
  In particulary, any 2-functor preserving contravariance also preserves opposites.
\end{prop}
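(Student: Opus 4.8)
The plan is to lean on the explicit description of $g$-variators obtained immediately above: an object is a $g$-variator of $x$ exactly when it carries a $g$-variant isomorphism with $x$. Spelled out, $\act g x$ is a $g$-variator of $x$ iff there are morphisms $\chi_{g,x}\in\A^g(x,\act g x)$ and $\xi_{g,x}\in\A^{g^{-1}}(\act g x,x)$ satisfying $\xi_{g,x}\circ\chi_{g,x}=1_x$ and $\chi_{g,x}\circ\xi_{g,x}=1_{\act g x}$; and conversely any such mutually inverse pair exhibits $\act g x$ as a $g$-variator. This equational reformulation is what makes the statement nearly immediate.

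First I would transport this data along $F$. A $g$-variant \bW-functor carries $g'$-variant morphisms to $g'$-variant morphisms for every $g'\in G$ and preserves identities and composites, so the images $F(\chi_{g,x})\in\B^g(Fx,F(\act g x))$ and $F(\xi_{g,x})\in\B^{g^{-1}}(F(\act g x),Fx)$ are again of the correct variances. Applying $F$ to the two defining equations then gives $F(\xi_{g,x})\circ F(\chi_{g,x})=F(1_x)=1_{Fx}$ and $F(\chi_{g,x})\circ F(\xi_{g,x})=F(1_{\act g x})=1_{F(\act g x)}$, so $F(\chi_{g,x})$ and $F(\xi_{g,x})$ constitute a $g$-variant isomorphism between $Fx$ and $F(\act g x)$. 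By the converse half of the characterization, $F(\act g x)$ is therefore a $g$-variator of $Fx$. The final sentence of the proposition is then just the instance $\bW=\bCat$, $g=-$, in which a $-$-variator is an opposite.

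I do not expect a genuine obstacle here, and that absence is precisely the point of having first recast the universal property of $\chi_{g,x}$ as a $g$-variant isomorphism. A bare representability or universal property is not in general preserved by an arbitrary structure-preserving functor, whereas invertibility is a purely equational condition and so is automatically carried along by any map that preserves composition and identities. This is exactly the sense in which $g$-variators — and opposites in particular — form an absolute (Cauchy) colimit, preserved by all \bW-functors.
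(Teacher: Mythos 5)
Your proposal is correct and is exactly the paper's argument: the paper's one-line proof (``It obviously preserves $g$-variant isomorphisms'') relies on the same equational characterization of $g$-variators as $g$-variant isomorphisms established in the paragraphs immediately preceding the proposition, which you have simply spelled out in full. No differences in approach, and no gaps.
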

\begin{proof}
  It obviously preserves ``$g$-variant isomorphisms''.
\end{proof}

Thus we have:

\begin{thm}\label{thm:gm-2cat}
  The 2-category of 2-categories with strong duality involutions,
  duality 2-functors, and duality 2-natural transformations is
  2-equivalent to the 2-category of 2-categories with contravariance
  in which every object has a strict opposite, 2-functors preserving
  contravariance, and 2-natural transformations respecting
  contravariance.
\end{thm}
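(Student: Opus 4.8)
The plan is to recognize the statement as a direct instance of the general comparison between pseudo algebras and representable virtual algebras developed in \cite{cs:multicats}, specialized to the 2-monad $T$ on \wcat with $\bW=\bCat$ and $G=\{+,-\}$. That theory supplies a comparison 2-functor $\Phi$ from normal pseudo $T$-algebras to virtual $T$-algebras ($G$-variant \bW-categories), sending a twisted $G$-action $a:T\cA\to\cA$ to $\A=\cA(a,1)$ as in \cite[Theorem 9.2]{cs:multicats} (concretely $\A\p(x,y)=\cA(x,y)$ and $\A\m(x,y)=\cA(x\o,y)$). By \cite[Corollary 9.4]{cs:multicats} the essential image of $\Phi$ consists of exactly the representable $G$-variant \bW-categories, i.e.\ those satisfying conditions \ref{item:rep1} and \ref{item:rep2}; granting that $\Phi$ is 2-fully-faithful, it therefore restricts to a 2-equivalence onto this full sub-2-category, and all that remains is to rename both ends.

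First I would apply the object-level dictionary already in place. On the domain, \cref{thm:2monad} identifies normal pseudo $T$-algebras, pseudo $T$-morphisms and $T$-2-cells with 2-categories carrying strong duality involutions, duality 2-functors and duality 2-natural transformations. On the codomain, \cref{defn:2cat-contra} identifies virtual $T$-algebras with 2-categories with contravariance, whose morphisms and 2-cells are by definition the 2-functors preserving contravariance and the 2-natural transformations respecting contravariance. Hence the two 2-categories named in the statement are precisely the source and target of the restricted $\Phi$, once we check that representability is the same as every object having a strict opposite.

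That last match is the only computational ingredient, and it was carried out in the discussion preceding \cref{defn:opposite}: for $G=\{+,-\}$ the $+$-variator is trivial ($\act{+}{x}=x$ with $\chi_{+,x}=1_x$), so conditions \ref{item:rep1} and \ref{item:rep2} collapse to the existence of a $(-)$-variator for every object, which is exactly a strict opposite in the sense of \cref{defn:opposite}. Thus the representable $G$-variant \bW-categories are exactly the 2-categories with contravariance in which every object has a strict opposite, and the restricted $\Phi$ is the asserted 2-equivalence.

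The step I expect to carry the real weight is the 2-full-faithfulness of $\Phi$, since a duality 2-functor carries the comparison datum $(\fii,\theta)$ of \cref{defn:duality-functor} while a bare morphism of virtual algebras carries none. I would not verify this by hand: it is part of the general multicategorical theory of \cite{cs:multicats}, and conceptually it holds because opposites are \emph{absolute}. Concretely, \cref{thm:gm-abs} shows that every 2-functor preserving contravariance automatically preserves opposites, so the isomorphism $\fii$ (and hence the axiom $\theta$) is forced by the universal property of $\chi$ rather than chosen; the duality-2-functor structure is therefore a property of, not extra data on, a morphism of the underlying 2-categories with contravariance, which is exactly what 2-full-faithfulness records. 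The only mild caveat is that an explicit pseudo-inverse to $\Phi$ must choose an opposite $x\o$ for each object, but any two choices give canonically isomorphic involutions, so this is immaterial for a 2-equivalence.
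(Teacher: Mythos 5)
Your proposal is correct and follows essentially the same route as the paper, which likewise reduces everything to the representability theory of \cite{cs:multicats} (via its Theorem 9.13, identifying representable virtual $T$-algebras with pseudo $T$-algebras and \emph{lax} $T$-morphisms) together with the normalization re-choosing $\act{1}{(-)}$ to be the identity. Your ``2-full-faithfulness of $\Phi$'' is precisely that lax-to-pseudo upgrade, and you justify it by the same key lemma the paper uses, \cref{thm:gm-abs}, whose absoluteness argument forces $\fii$ and $\theta$ to be property rather than data.
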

\begin{proof}
  By~\cite[Theorem 9.13]{cs:multicats} and the remarks preceding
  \cref{defn:opposite}, the latter 2-category is equivalent to the
  2-category of pseudo $T$-algebras, \emph{lax} $T$-morphisms, and
  $T$-2-cells.  However, \cref{thm:gm-abs} implies that in fact every
  lax $T$-morphism is a pseudo $T$-morphism.  Finally, every pseudo
  $T$-algebra is isomorphic to a normal pseudo one obtained by
  re-choosing $\act{1}{(-)}$ to be the identity (which it is assumed
  to be isomorphic to).
\end{proof}

\section{Contravariance through enrichment}
\label{sec:contr-thro-enrichm}

We continue with our setup from \cref{sec:genmulti}, with a complete
and cocomplete closed monoidal category \bW and a group $G$ acting on
\bW.  We start by noticing that the monad $T$ on \wprof constructed in
\cref{sec:genmulti} can actually be obtained in a standard way from a
simpler monad.

Recall that there is another equipment \wmat whose objects are sets,
whose vertical arrows are functions, and whose horizontal arrows
$X\hto Y$ are ``\bW-valued matrices'', which are just functions
$Y\times X \to \bW$; we call them matrices because we compose them by
``matrix multiplication''.  The equipment \wprof is obtained from
\wmat by applying a functor \dMod that constructs monoids (monads) and
modules in the horizontal directions
(see~\cite{shulman:frbi,cs:multicats}).  We now observe that our monad
$T$, like many monads on equipments of profunctors, is also in the
image of \dMod.

Let $S$ be the following monad on \wmat.
On objects and vertical arrows, it acts by $S(X)=X\times G$.
On a \bW-matrix $M:Y\times X \to \bW$ it acts by
\[SM((y,h),(x,g)) =
  \begin{cases}
    \pact g{M(y,x)} &\quad g=h\\
    \emptyset &\quad g\neq h
  \end{cases}
\]
We may write this schematically using a Kronecker delta as
\[SM((y,h),(x,g)) = \delta_{g,h}\cdot \pact g{M(y,x)}.\]
On a composite of matrices $X \xhto{M} Y \xhto{N} Z$ we have
\begin{align*}
  (SM \odot SN)((z,k),(x,g)) &= \sum_{(y,h)} (\delta_{g,h}\cdot \act g{M(y,x)}) \otimes (\delta_{h,k}\cdot \act h{N(z,y)})\\
  &\cong \delta_{k,g}\sum_{y} \act g{M(y,x)} \otimes \act g{N(z,y)}\\
  &\cong \delta_{k,g} \Bigpact g{\sum_{y} \big(M(y,x) \otimes N(z,y)\big)}\\
  &= \delta_{k,g}\cdot \act g{(M\odot N)(z,x)}\\
  &= S(M\odot N)((z,k),(x,g))
\end{align*}
making $S$ a pseudofunctor.  The monad multiplication and unit are
induced from the multiplication and unit of $G$; the squares
\begin{equation}
  \vcenter{\xymatrix{
      X\ar[r]|{|}^M\ar[d]_\eta \ar@{}[dr]|{\Downarrow} &
      Y\ar[d]^\eta\\
      SX\ar[r]|{|}_{S M} &
      SY
    }}\qquad
  \vcenter{\xymatrix{
      SSX\ar[r]|{|}^{SSM}\ar[d]_\mu \ar@{}[dr]|{\Downarrow} &
      SSY\ar[d]^\mu\\
      SX\ar[r]|{|}_{S M} &
      SY
      }}
\end{equation}
map the components $M(y,x)$ and $\pact{h}{\act{g}{M(y,x)}}$
isomorphically to $\act 1{M(y,x)}$ and $\act{gh}{M(y,x)}$
respectively.

Now, recalling that $T\cA = \sum_{g\in G} \act g \cA$, we see that
$\ob(T\cA) = \ob(\cA) \times G$ and
\[T\cA((y,h),(x,g)) = \delta_{h,g}\cdot \pact g{\cA(y,x)},\]
and so in fact $T \cong \dMod(S)$.  Thus, by~\cite[Theorem
8.7]{cs:multicats}, virtual $T$-algebras can be identified with
``$S$-monoids''; these are defined like virtual $S$-algebras, with
sets and matrices of course replacing categories and profunctors, and
omitting the requirement that the unit be an isomorphism.  Thus, an
$S$-monoid consists of a set $X$ of objects, a function
$\A:S(X)\times X = X\times G\times X \to\bW$, unit maps
$1_x:I\to \A^1(x,x)$, and composition maps that turn out to look like
$\A^g(y,z) \otimes \pact g{\A^h(x,y)} \to \A^{hg}(x,z)$.  Note that
this is exactly what we obtain from a virtual $T$-algebra by omitting
the redundant data of the hom-objects $\cA(x,y)$ and their
isomorphisms to $\A^1(x,y)$; this is essentially the content
of~\cite[Theorem 8.7]{cs:multicats} in our case.

In~\cite{cs:multicats}, the construction of $S$-monoids is factored
into two: first we build a new equipment $\hkl(\wmat,S)$ whose objects
and vertical arrows are the same as \wmat but whose horizontal arrows
$X\hto Y$ are the horizontal arrows $X\hto SY$ in \wmat, and then we
consider horizontal monoids in $\hkl(\wmat,S)$.  In fact,
$\hkl(\wmat,S)$ is in general only a ``virtual equipment'' (i.e.\ we
cannot compose its horizontal arrows, though we can ``map out of
composites'' like in a multicategory), but in our case it is an
ordinary equipment because $S$ is ``horizontally
strong''~\cite[Theorem A.8]{cs:multicats}.  This means that $S$ is a
strong functor (which we have already observed) and that the induced
maps of matrices
\begin{align*}
  (\eta,1)_!M &\to (1,\eta)^*S M \\
  (\mu,1)_!SSM &\to (1,\mu)^*\odot SSM
\end{align*}
are isomorphisms, where $f^*$ and $f_!$ denote the pullback and its
left adjoint pushforward of matrices along functions.  Indeed, we have
\begin{align*}
  (\eta,1)_!M((y,h),x) &= \delta_{h,1} \cdot M(y,x)\hspace{2cm}\text{while}\\
  (1,\eta)^*S M((y,h),x) &= SM((y,h),(x,1))\\
  &= \delta_{h,1} \cdot \pact 1{M(y,x)}\\
  &= \delta_{h,1} \cdot M(y,x)
\end{align*}
and likewise
\begin{align*}
  (\mu,1)_!SSM((y,h),((x,g_1),g_2)) &= \textstyle\sum_{h_2 h_1 = h} SSM(((y,h_1),h_2),((x,g_1),g_2))\\
  &= \textstyle\sum_{h_1 h_2 = h} \delta_{h_2,g_2} \cdot \Bigpact{g_2}{\delta_{h_1,g_1} \cdot \act{g_1}{M(y,x)}}\\
  &= \textstyle\sum_{h_1 h_2 = h} \delta_{h_2,g_2}\delta_{h_1,g_1} \cdot \act{g_1 g_2}{M(y,x)}\\
  &= \delta_{h,g_1 g_2} \cdot \act{g_1 g_2}{M(y,x)}\\
  \intertext{while}
  (1,\mu)^*\odot SSM((y,h),((x,g_1),g_2)) &= SSM((y,h),(x,g_1 g_2))\\
  &= \delta_{h,g_1 g_2} \cdot \act{g_1 g_2}{M(y,x)}.
\end{align*}
Inspecting the definition of composition in~\cite[Appendix
A]{cs:multicats}, we see that the composite of $M:X\hto SY$ and
$N:Y\hto SZ$ is
\[ (M\odot_S N)((z,h),x) =  \sum_y \sum_{g_1g_2 = h} M((y,g_1),x) \odot \act{g_1}{N((z,g_2),y)}
\]
Note that what comes after the $\sum_y$ depends only on $M((y,-),x)$
and $N((z,-),y)$, which are objects of $\bW^G$.  Thus, if we write
\twgw for the category $\bW^G$ with the following monoidal structure:
\[ (M \otimes N)(h) = \sum_{g_1 g_2 = h} M(g_1) \odot \act{g_1}{N(g_2)}
\]
then we have $\hkl(\wmat,S) \cong \twgwmat$.  It follows that
$S$-monoids (that is, $G$-variant \bW-categories) can equivalently be
regarded as ordinary monoids in the equipment \twgwmat.  But since
monoids in an equipment of matrices are simply enriched categories, we
can identify $G$-variant \bW-categories with $\twgw$-enriched
categories.

Note that this monoidal structure on \twgw is \emph{not} symmetric.
It is a version of Day convolution~\cite{day:refl-closed} that is
``twisted'' by the action of $G$ on \bW (see~\cite{loregian:twgw} for
further discussion).  Like an ordinary Day convolution monoidal
structure, it is also closed on both sides (as long as \bW is); that
is, we have left and right hom-functors $\homlbare$ and $\homrbare$
with natural isomorphisms
\begin{equation}\label{eq:biclosed}
  (\twgw)(A\otimes B, C) \cong (\twgw)(A, \homr B C) \cong (\twgw)(B, \homl A C).
\end{equation}
Inspecting the definition of the tensor product in \twgw, it suffices to define
\begin{align*}
  (\homr B C)(g) &\coloneqq \prod_{h} \big(\homr{\act g{B(h)}}{C(gh)}\big)\\
  (\homl A C)(g) &\coloneqq \prod_{h} \left(\homl{\act {h^{-1}}{A(h)}}{\act{h^{-1}}{C(hg)}}\right)
\end{align*}
(This is another place where we use the fact that $G$ is a group
rather than a monoid.)  As usual, it follows that \twgw can be
regarded as a \twgw-category (that is, as a $g$-variant \bW-category),
with hom-objects $\underline{\twgw}(A,B) \coloneqq (\homr A B)$.  (The
fact that a closed monoidal category becomes self-enriched is often
described only for closed \emph{symmetric} monoidal categories, but it
works just as well for closed non-symmetric ones, as long as we use
the \emph{right} hom.)

Bringing things back down to each a bit, in our specific case with
$\bW=\bCat$ and $G=\{+,-\}$, let us write
$\V = \int_{\{+,-\}}\!\bCat$.  The underlying category of \V is just
$\bCat\times\bCat$, but we denote its objects as $A = (A\p,A\m)$, with
$A\p$ the \emph{covariant part} and $A\m$ the \emph{contravariant
  part}.
The monoidal structure on \V is the following nonstandard one:
\begin{align*}
(A\otimes B)\p &\coloneqq \big(A\p \times B\p\big) \amalg \big(A\m \times (B\m)\op\big)\\
(A\otimes B)\m &\coloneqq \big(A\p \times B\m\big) \amalg \big(A\m \times (B\p)\op\big)
\end{align*}
The unit object is
\[ \unit \coloneqq (1,0) \]
where $1$ denotes the terminal category and $0$ the initial (empty) one.
The conclusion of our equipment-theoretic digression above is then the following:

\begin{thm}\label{thm:contrav-enriched}
  The 2-category of 2-categories with contravariance, 2-functors
  preserving contravariance, and 2-natural transformations respecting
  contravariance is 2-equivalent to the 2-category of \V-enriched
  categories.\qed
\end{thm}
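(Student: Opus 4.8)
This statement is the specialization to $\bW=\bCat$ and $G=\{+,-\}$ of the general correspondence assembled in the discussion above, so my plan is first to record that chain of identifications and then to check that it restricts correctly and respects the 2-categorical structure. For arbitrary \bW and $G$ the chain reads: a 2-category with contravariance is, by \cref{defn:2cat-contra}, a $G$-variant \bW-category, i.e.\ a virtual $T$-algebra; since $T\cong\dMod(S)$, these are identified by \cite[Theorem 8.7]{cs:multicats} with the $S$-monoids, equivalently with the horizontal monoids in $\hkl(\wmat,S)$; because $S$ is horizontally strong \cite[Theorem A.8]{cs:multicats} the latter is an ordinary equipment, and the computation just before the theorem identifies it with \twgwmat; finally, monoids in an equipment of matrices are precisely enriched categories, so we arrive at categories enriched in \twgw.

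Second, I would verify that \twgw coincides with \V in the case of interest. The underlying category $\bCat^{\{+,-\}}$ is $\bCat\times\bCat$, and writing an object as the pair $(A\p,A\m)$ of its values at $+$ and $-$, the twisted convolution $(A\otimes B)(h)=\sum_{g_1g_2=h}A(g_1)\odot\act{g_1}{B(g_2)}$ unwinds --- using $\odot=\times$, $\act{+}{(-)}=(-)$ and $\act{-}{(-)}=(-)\op$ --- into the case $h=+$ (where $(g_1,g_2)$ is $(+,+)$ or $(-,-)$) and the case $h=-$ (where it is $(+,-)$ or $(-,+)$), reproducing exactly the formulas for $(A\otimes B)\p$ and $(A\otimes B)\m$ displayed just before the theorem. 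The convolution unit, the functor carrying $+$ to the terminal category and $-$ to the empty one, is precisely $\unit=(1,0)$, and the biclosed structure is the twisted Day convolution recorded above. Hence \twgw and \V agree as monoidal categories, and the general correspondence specializes to the asserted one.

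Third --- and this is the step I expect to be the only real obstacle --- I would check that the correspondence is an equivalence of \emph{2-categories}, not merely a bijection on objects. Each link in the chain is compatible with morphisms and 2-cells: \cite[Theorem 8.7]{cs:multicats} is an equivalence at the level of algebras, their morphisms, and transformations; the identification $\hkl(\wmat,S)\cong\twgwmat$ is an isomorphism of equipments and so carries monoids together with their morphisms and transformations across; and monoid morphisms and transformations in a matrix equipment are exactly enriched functors and enriched natural transformations. Tracing these through, the 2-functors preserving contravariance correspond to \V-enriched functors and the 2-natural transformations respecting contravariance to \V-natural transformations. The one point needing attention is the passage from virtual $T$-algebras to $S$-monoids, which silently discards the redundant hom-objects $\cA(x,y)$ (canonically isomorphic to $\A^1(x,y)$); this is precisely what turns the correspondence into an equivalence rather than an isomorphism, but it is dealt with once and for all by \cite[Theorem 8.7]{cs:multicats}, so no further coherence verification is required.
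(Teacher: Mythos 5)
Your proposal is correct and follows essentially the same route as the paper, whose ``proof'' is precisely the equipment-theoretic chain preceding the statement (hence the \qed on the theorem itself): $T\cong\dMod(S)$, virtual $T$-algebras identified with $S$-monoids via \cite[Theorem 8.7]{cs:multicats}, the isomorphism $\hkl(\wmat,S)\cong\twgwmat$ from horizontal strength of $S$, monoids in a matrix equipment being enriched categories, and the specialization of \twgw to \V. Your attention to the discarded redundant hom-objects $\cA(x,y)\cong\A^1(x,y)$ matches the paper's own parenthetical remark on that point, and your unwinding of the twisted convolution at $\bW=\bCat$, $G=\{+,-\}$ is exactly the verification the paper leaves implicit.
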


This theorem is easy to prove explicitly as well, of course.  A
\V-category has, for each pair of objects $x,y$, a pair of
hom-categories $(\A\p(x,y),\A\m(x,y))$, together with composition
functors that end up looking just like those in
\cref{defn:2cat-contra}, and so on.  But I hope that the digression
makes this theorem seem less accidental; it also makes it clear how to
generalize it to other examples.

The underlying ordinary category $\A_{\,o}$ of a 2-category \A with
contravariance, in the usual sense of enriched category theory,
consists of its objects and its covariant 1-morphisms (the objects of
the categories $\A\p(x,y)$).  It also has an underlying ordinary
2-category, induced by the lax monoidal forgetful functor
$(-)\p : \V \to \bCat$, whose hom-categories are the categories
$\A\p(x,y)$; we denote this 2-category by $\A\p$.  Of course, there is
no 2-category to denote by ``$\A\m$'', but we could say for instance
that $\A\m$ is a profunctor from $\A\p$ to itself.

\section{Opposites through enrichment}
\label{sec:opposites}

For most of this section, we let $(\V,\otimes,\unit)$ be an arbitrary
biclosed monoidal category, not assumed symmetric.  We are, of course,
thinking of our \V from the last section, or more generally \twgw.

Suppose \A is a \V-category, that $x\in\ob\A$, and $\om\in\ob\V$.  A
\textbf{copower} (or \textbf{tensor}) of $x$ by $\om$ is an object
$\om\odot x$ of $\A$ together with isomorphisms in \V:
\begin{equation}\label{eq:copower}
  \A(\om\odot x,y) \cong \homr \om {\A(x,y)}
\end{equation}
for all $y\in\ob\A$, which are \emph{\V-natural} in the sense that for
any $y,z\in\ob\A$, the following diagram commutes:
\[\vcenter{\xymatrix{
    \A(y,z) \otimes \A(\om\odot x,y)\ar[r]^-\cong \ar[d] &
    \A(y,z) \otimes (\homr \om {\A(x,y)}) \ar[r] &
    \homr \om {(\A(y,z)\otimes \A(x,y))} \ar[d]\\
    \A(\om\odot x, z)\ar[rr]_\cong &&
    \homr \om{\A(x,z)}
  }}\]
Taking $y=\om\odot x$ in~\eqref{eq:copower}, we obtain from
$1_{\om\odot x}$ a canonical map $\om \to \A(x,\om\odot x)$, which by
the Yoneda lemma determines~\eqref{eq:copower} uniquely.  Of course,
this is just the usual definition of copowers in enriched categories,
specialized to enrichment over \V.  We have spelled it out explicitly
to emphasize that it makes perfect sense even though \V is not
symmetric, as long as we choose the correct hom $\homrbare$ and not
$\homlbare$ (see~\cite{street:absolute}, which treats the even more
general case of enrichment over a \emph{bicategory}).

Note that if $\A=\uV$ (the category \V regarded as a \V-category),
then the tensor product $\om\otimes x$ is a copower $\om\odot x$.
Moreover, for general \A, if $\om,\varpi\in\V$ and the copowers
$\om\odot x$ and $\varpi\odot (\om\odot x)$ exist, we have
\begin{align}
  \A(\varpi \odot (\om\odot x), y)
  &\cong \homr \varpi {\A(\om\odot x,y)}\\
  &\cong \homr \varpi (\homr \om {\A(x,y)})\\
  &\cong \homr {(\varpi \otimes \om)} {\A(x,y)}
\end{align}
so that $\varpi\odot (\om\odot x)$ is a copower
$(\varpi\otimes \om) \odot x$.  In particular, these observations
mandate writing the copower as $\om\odot x$ rather than $x\odot \om$.

Frequently one defines a \emph{power} in a \V-category \A to be a
copower in $\A\op$, but since our \V is not symmetric, \V-categories
do not have opposites.  Thus, we must define directly a \textbf{power}
of $x$ by $\om$ to be an object $\pow \om x\in \ob\A$ together with
isomorphisms
\begin{equation}
  \A(y,\pow \om x) \cong \homl \om {\A(y,x)}
\end{equation}
for all $y\in\ob\A$, which are \V-natural in that the following diagram commutes:
\[\vcenter{\xymatrix{
    \A(y,\pow\om x) \otimes \A(z,y)\ar[r]^-\cong\ar[d] &
    (\homl \om {\A(y,x)}) \otimes \A(z,y)\ar[r] &
    \homl \om {(\A(y,x) \otimes \A(z,y))}\ar[d]\\
    \A(z,\pow\om x) \ar[rr]_-\cong & &
    \homl \om {\A(z,x)}
    }}
\]
Analogous arguments to those for copowers show that when $\A=\uV$,
then $\homl\om x$ is a power $\pow \om x$, and that in general we have
$\pow\varpi {(\pow \om x)}\cong \pow{(\om\otimes\varpi)} x$.  If both
the copower $\om\odot x$ and the power $\pow\om x$ exist, then we have
\begin{align}
  \A_{\,o}(\om\odot x,y)
  &\cong \V(\unit,{\A(\om\odot x,y)})\\
  &\cong \V(\unit, \homr\om{\A(x,y)})\\
  &\cong \V(\om,{\A(x,y)})\\
  &\cong \V(\unit,\homl\om{\A(x,y)})\\
  &\cong \V(\unit,\A(x,\pow\om y))\\
  &\cong \A_{\,o}(x,\pow\om y).
\end{align}
so that the endofunctors $(\om\odot -)$ and $(\pow\om-)$ on the
underlying 1-category $\A_{\,o}$ are adjoint.  They are \emph{not}
adjoint \V-functors, even when $\A=\uV$: in our motivating example,
the isomorphisms~\eqref{eq:biclosed} do not even lift from the
1-category $\uV_{\,o}=\V$ to the 2-category $\uV\p$.

Now suppose that $\om$ is \emph{right dualizable} in \V, i.e.\ that we
have an object $\om^*\in\V$ and morphisms $\om^* \otimes \om\to \unit$
and $\unit\to\om\otimes\om^*$ satisfying the triangle identities.
Then $(-\otimes\om^*)$ is right adjoint to $(-\otimes\om)$, hence
isomorphic to $(\homr\om-)$; and dually we have
$(\om\otimes-) \cong (\homl{\om^*}-)$.  Thus, a copower $\om\odot x$
in a \V-category \A is equivalently characterized by an isomorphism
\begin{equation}
  \A(\om\odot x,-) \cong \A(x,-) \otimes \om^*,\label{eq:abscopower}
\end{equation}
while a power $\pow{\om^*}x$ is characterized by an isomorphism
\begin{equation}
  \A(-,\pow{\om^*} x) \cong \om\otimes \A(-,x).\label{eq:abspower}
\end{equation}
However, for fixed $x$, the right-hand sides of~\eqref{eq:abscopower}
and~\eqref{eq:abspower} are adjoint in the bicategory of \V-modules.
Since $\A(\om\odot x,-)$ always has an adjoint $\A(-,\om\odot x)$, and
likewise $\A(-,\pow{\om^*} x)$ always has an adjoint
$\A(\pow{\om^*} x,-)$, it follows that giving a copower $\om\odot x$
is equivalent to giving a power $\pow{\om^*}x$.

Now let us specialize to the case of $\twgw$.  Then for any $g\in G$,
we have a \textbf{twisted unit} $\twu{g} \in\twgw$, defined by
$\twu{g}(h) = \delta_{g,h} \cdot \unit$.  By definition of $\homrbare$
and $\homlbare$, we have
\begin{align}
  (\homr {\twu{h}} {\A(x,y)})(g) \;&\cong\; \A^{gh}(x,y) 
  \qquad\text{and}\\
  (\homl {\twu{h}} {\A(x,y)})(g) \;&\cong\; \act{h^{-1}}{(\A^{hg}(x,y))}. 
\end{align}
Thus, $\twu{h}\odot x$, if it exists, is characterized by isomorphisms
\begin{equation}
  \A^g(\twu{h}\odot x,y) \cong \A^{gh}(x,y)
\end{equation}
that are suitably and jointly natural in $y$.  In other words, a
copower $\twu{h}\odot x$ is precisely an \emph{$h$-variator} of $x$ as
defined in \cref{sec:genmulti}.  And in our particular case of
$\bW=\bCat$, a copower $\du\odot x$ is precisely an \emph{opposite} of
$x$ as defined in \cref{defn:opposite}.  Thus we have:

\begin{thm}
  A 2-category with contravariance has opposites, as in
  \cref{defn:opposite}, exactly if when regarded as a \V-category it
  has all copowers by $\du$.\qed
\end{thm}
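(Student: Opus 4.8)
The plan is to read the statement off the component-wise description of copowers by twisted units established just above, using the identification of \cref{thm:contrav-enriched}. Regarding a 2-category with contravariance \A as a \V-category, and recalling that the dual unit is the twisted unit at the nonidentity element, $\du = \twu{-}$, the entire content is already packaged in the isomorphism $(\homr{\twu{h}}{\A(x,y)})(g) \cong \A^{gh}(x,y)$ specialized to $\bW = \bCat$ and $h = -$. Since a copower $\du\odot x$ exists for a single object $x$ exactly when $x$ has an opposite, quantifying over all objects will turn this into the asserted biconditional.

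First I would unfold the copower isomorphism $\A(\du\odot x, y) \cong \homr{\du}{\A(x,y)}$ in each variance. Using $\du\p = 0$ and $\du\m = 1$, together with $\homr{0}{-} \cong 1$ and $\homr{1}{-} \cong (-)$ in \bCat, and that composing $-$ with itself gives the identity ${+}$, the right-hand side acquires covariant component $\A\m(x,y)$ and contravariant component $\A\p(x,y)$. Reading off the two components of the copower isomorphism then yields exactly
\[ \A\p(\du\odot x, y) \cong \A\m(x,y), \qquad \A\m(\du\odot x, y) \cong \A\p(x,y), \]
which are the two isomorphisms of \cref{defn:opposite} with $x\o \coloneqq \du\odot x$.

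Next I would match the remaining data together with the naturality. The universal map $\du \to \A(x, \du\odot x)$ attached to a copower has trivial covariant component (as $\du\p = 0$) and a single contravariant component, which is precisely a contravariant morphism $\chi_x \in \A\m(x, \du\odot x)$; this is the $\chi_x$ of \cref{defn:opposite}. The \V-naturality square for the copower isomorphism, evaluated on $\chi_x$, says exactly that the two displayed isomorphisms are the maps induced by precomposition with $\chi_x$. Conversely, an opposite supplies such a $\chi_x$ and the two isomorphisms, which reassemble into the copower datum by the same component dictionary; ordinary naturality in $y$ on each side forces the single \V-naturality square, since the only nontrivial component of $\du$ is the terminal one.

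I expect no genuine obstacle, which is why the statement can be asserted with a bare \qed: once \cref{thm:contrav-enriched} and the computation of $\homr{\twu{h}}{-}$ in \twgw are in hand, everything reduces to reading off components. The one point needing a moment's care is the bookkeeping that identifies the universal element of the copower with $\chi_x$ and confirms that \V-naturality reduces precisely to the ``precompose with $\chi_x$'' clause of \cref{defn:opposite}.
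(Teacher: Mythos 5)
Your proposal is correct and takes essentially the same route as the paper, which states the theorem with a bare q.e.d.\ precisely because the immediately preceding computation $(\homr{\twu{h}}{\A(x,y)})(g)\cong\A^{gh}(x,y)$, specialized to $h=-$, already identifies copowers by $\du$ with opposites, the universal element $\du\to\A(x,\du\odot x)$ being your $\chi_x$. One micro-quibble: in your converse direction the \V-naturality of the precomposition maps is most cleanly justified by associativity of composition in \A (the standard enriched-Yoneda step) rather than by ordinary naturality ``forcing'' the square, but this matches the paper's own level of detail (``suitably and jointly natural in $y$'') and does not affect the argument.
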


Note that since $\twu{h}\otimes\twu{h^{-1}} \cong \unit$, in
particular $\twu{h}$ is dualizable.  Thus, copowers by $\twu{h}$ are
equivalent to powers by $\twu{h^{-1}}$.  In particular, since
$-\in \{+,-\}$ is its own inverse, it follows that $\du$ is self-dual,
and opposites are also characterized by isomorphisms
\begin{equation}
  \A\p(y,x\o) \cong \A\m(y,x)\op \qquad\text{and}\qquad \A\m(y,x\o) \cong \A\p(y,x)\op.
\end{equation}
This gives another reason why a 2-functor preserving contravariance
must preserve opposites: copowers by a dualizable object are absolute
colimits~\cite{street:absolute}.

\section{Bicategories with contravariance}
\label{sec:bicat-contra}

We have now reached the top of the right-hand side of the ladder from
\cref{sec:introduction}.  It remains to move across to the other side
and head down, starting with a bicategorical version of \V-categories
for our $\V=\int_{\{+,-\}}\bCat$.

In fact, it will be convenient to stay in a more general setting.
Thus, suppose that our monoidal category \bW is actually a 2-category
\cW, and that our group $G$ acts on it by 2-functors.  In this case,
the construction of \twgw can all be done with 2-categories, obtaining
a monoidal 2-category \twgcw (in the strict sense of a monoidal
\bCat-enriched category).  Since a monoidal 2-category is \textit{a
  fortiori} a monoidal bicategory, we can consider \twgcw-enriched
bicategories, which we call \textbf{$g$-variant \cW-bicategories}.

The most comprehensive extant reference on enriched bicategories seems
to be~\cite{gs:freecocomp}, though the basic definition dates back at
least to~\cite{carmody:thesis,lack:thesis}.  The definition of an
enriched bicategory is quite simple: we just write out the definition
of bicategory and replace all hom-categories by objects of \twgcw,
cartesian products of categories by $\otimes$, and functors and
natural transformations by morphisms and 2-cells in \twgcw.  If we
write this out explicitly, it consists of the following.
\begin{itemize}[leftmargin=2em]
\item A collection $\ob\A$ of objects;
\item For each $x,y\in\ob\A$ and $g\in G$, a category $\A^g(x,y)$;
\item For each $x\in\ob\A$, a unit morphism $1_x : \unit \to \A^1(x,x)$;
\item For each $x,y,z\in\ob\A$ and $g,h\in G$, composition morphisms
  \[ \A^h(y,z) \otimes \pact{h}{\A^g(x,y)} \to \A^{gh}(x,z) \]
\item For each $x,y\in \ob\A$ and $g\in G$, two natural unitality isomorphisms;
\item For each $x,y,z,w\in\ob\A$ and $g,h,k\in G$, an associativity isomorphism;
\item For each $x,y,z\in \ob\A$ and $g,h\in G$, a unitality axiom holds; and
\item For each $x,y,z,w,u\in\ob\A$ and $g,h,k,\ell\in G$, an associativity pentagon holds.
\end{itemize}
Enriched bicategories, of course, come naturally with a notion of
enriched functor.  (In fact, as described in~\cite{gs:freecocomp} we
have a whole tricategory of enriched bicategories, but we will not
need the higher structure.)  Explicitly, a \twgcw-enriched functor
$F:\A\to\B$ consists of
\begin{itemize}[leftmargin=2em]
\item A function $F:\ob\A\to\ob\B$; and
\item For each $x,y\in\ob\A$ and $g\in G$, a morphism $F:\A^g(x,y)\to\B^g(Fx,Fy)$;
\item For each $x\in\ob\A$, an isomorphism $F(1_x) \cong 1_{F x}$;
\item For each $x,y,z\in \ob\A$ and $g,h\in G$, a natural functoriality isomorphism of the form $(Fg)(Ff) \cong F(gf)$;
\item For each $x,y\in \ob\A$ and $g\in G$, a unit coherence diagram commmutes;
\item For each $x,y,z,w\in \ob\A$ and $g,h,k\in G$, an associativity coherence diagram commutes.
\end{itemize}

In the case of interest, we have $\bW=\bCat$, which is of course
enhances to the 2-category \cCat.  However, we cannot take
$\cW=\cCat$, because as we have remarked, $(-)\op$ is not a 2-functor
on \cCat, so $\{+,-\}$ does not act on \cCat through 2-functors.
However, $(-)\op$ is a 2-functor on $\cCat_g$, the 2-category of
categories, functors, and natural \emph{isomorphisms}; so this is what
we take as our $\cW$.  We denote the resulting monoidal 2-category
\twgcw by \cV, and make the obvious definition:

\begin{defn}
  A \textbf{bicategory with contravariance} is a \cV-enriched
  bicategory, and a \textbf{pseudofunctor preserving contravariance}
  is a \cV-enriched functor.
\end{defn}

If we write this out explicitly in terms of covariant and
contravariant parts, we see that a bicategory with contravariance has
four kinds of composition functors, eight kinds of associativity
isomorphisms, and sixteen coherence pentagons.  Working with an
abstract \cW and $G$ thus allows us to avoid tedious case-analyses.

We now generalize the enriched notion of $g$-variator (and hence of
``opposite'') from \cref{sec:opposites} to the bicategorical case.
For any $\om\in\twgcw$, any \twgcw-bicategory \A, and any $x\in \A$, a
\textbf{copower} of $x$ by $\om$ is an object $\om\odot x$ together
with a map $\om \to \A(x,\om\odot x)$ such that for any $y$ the
induced map $\A(\om\odot x,y) \to \homr \om {\A(x,y)}$ is an
\emph{equivalence} (not necessarily an isomorphism).  (This is
essentially the special case of~\cite[10.1]{gs:freecocomp} when \cB is
the unit \twgcw-bicategory.)

As in \cref{sec:opposites}, we are mainly interested in the case when
$\om$ is one of the {twisted units} $\twu{g}$.  In this case we again
write $\act g x$ for $\twu{g}\odot x$, and the map
$\twu{g} \to \A(x,\act g x)$ is just a $g$-variant morphism
$\chi_{g,x}\in\A^g(x,\act g x)$.  Its universal property says that any
$gh$-variant morphism $x\tovar{gh} y$ factors essentially uniquely
through $\chi_{g,x}$ via an $h$-variant morphism
$\act g x \tovar{h} y$ (and similarly for 2-cells); that is, we have
equivalences
\[ \A^h(\act g x,y) \toiso \A^{gh}(x,y)
\]
As before, by Yoneda arguments this is equivalent to having a
$g$-variant morphism $x \tovar{g} \act g x$ and a $g^{-1}$-variant
morphism $\act g x \tovar{g^{-1}} x$ whose composites in both
directions are isomorphic to identities; that is, a ``$g$-variant
equivalence''.  In the specific example of $\cW=\cCat_g$ and
$G=\{+,-\}$, we of course call $\act - x$ a \textbf{(weak) opposite}
of $x$, written $x\o$.

Our goal now is to show that any weak duality involution on a
bicategory \cA gives it the structure of a bicategory with
contravariance having weak opposites; but to minimize case analyses,
we will work in the generality of \cW and $G$.  Thus, we first define
a \textbf{(weak, strictly unital) twisted $G$-action} on a
\cW-category \cA to consist of:
\begin{itemize}[leftmargin=2em]
\item For each $g\in G$, a \cW-functor
  $\pact g{-}:\act g \cA \to \cA$.  (Note that here $\act g \cA$
  denotes the hom-wise action, $\act g \cA(x,y) = \act{g}{\cA(x,y)}$.)
  When $g=1$ is the unit element of $G$, we ask that $\pact 1{-}$ be
  exactly equal to the identity functor.
\item For each $g,h\in G$, a \cW-pseudonatural adjoint equivalence
  \[ \xymatrix{
      \pact{gh}{\cA} \ar[dr]_{\pact{h}{\pact{g}{-}}} \ar[rr]^{\pact{gh}{-}} && \cA.\\
      & \act h \cA \ar[ur]_{\pact{h}{-}} \ar@{}[u]|(.6){\Downarrow\fy}
    }
  \]
  (Note that ${\pact{h}{\pact{g}{-}}}$ means the homwise endofunctor
  $\pact{h}{-}$ of $\cW$-bicategories applied to the action functor
  $\pact g{-}:\act g \cA \to \cA$.)  When $g$ or $h$ is $1\in G$, we
  ask that $\fy$ be exactly the identity transformation.
\item For each $g,h,k\in G$, an invertible \cW-modification
  \[ \vcenter{\xymatrix{
      &\act{hk}{\cA} \ar[dr] \ar[dd] \\
      \act{ghk}{\cA} \ar[ur] \ar[dr] \ar@{}[r]|(.6){\act k {\fy_{g,h}}} &
      \ar@{}[r]|(.4){\fy_{h,k}}& \cA \\
      & \act{k}{\cA} \ar[ur] }}
    \quad\overset{\zeta}{\Longrightarrow}\quad
    \vcenter{\xymatrix{&\act{hk}{\cA} \ar[dr] \ar@{}[d]|(.6){\fy_{g,hk}} \\
        \act{ghk}{\cA} \ar[ur] \ar[dr] \ar[rr] &
        \ar@{}[d]|(.4){\fy_{gh,k}} & \cA \\ 
        & \act{k}{\cA} \ar[ur] }}
  \]
  As before, when $g$, $h$, or $k$ is $1\in G$, we ask that $\ze$ be
  exactly the identity.
\item For each $g,h,k,\ell\in G$, a 4-simplex diagram of instances of
  $\zeta$ commutes.
\end{itemize}
In our motivating example of $\cW=\cCat_g$ and $G=\{+,-\}$, the strict
identity requirements mean that:
\begin{itemize}[leftmargin=2em]
\item The only nontrivial action is $\pact{-}{-}$, which we write as
$(-)\o$.
\item The only nontrivial $\fy$ is $\fy_{-,-}$, which has the same
type as the $\fy$ in \cref{defn:duality-involution}.
\item The only nontrivial $\ze$ is $\ze_{-,-,-}$, which has an
equivalent type to the $\ze$ in \cref{defn:duality-involution} (since
$--=+$ is the identity, $\fy_{-,--}$ and $\fy_{--,-}$ are identities,
so the type of $\ze$ displayed above has moved one copy of $\fy$ from
the codomain to the domain).
\item The only nontrivial axiom likewise reduces to the one given in
\cref{defn:duality-involution}.
\end{itemize}
Thus, this really does generalize our notion of duality involution.
Now we will show:

\begin{thm}\label{thm:bicat}
  Let \cA be a \cW-bicategory with a twisted $G$-action, and for
  $x,y\in \cA$ and $g\in G$ define $\A^g(x,y) = \cA(\act g x,y)$.
  Then $\A$ is a \twgcw-bicategory with copowers by all the twisted
  units.
\end{thm}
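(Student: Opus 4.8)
The plan is to write down the $\twgcw$-bicategory structure on $\A$ by hand---a ``beta-reduced'' version of the composite constructions on the right-hand side of the ladder, carried out directly because no theory of generalized multi-bicategories is available---and then to read off the copowers from the fact that the components of $\fy$ are equivalences. Concretely, the objects of $\A$ are those of $\cA$, and for $x,y\in\ob\A$ the hom-object $\A(x,y)\in\twgcw$ is the family $g\mapsto\A^g(x,y)=\cA(\act{g}{x},y)$. The unit $1_x:\unit\to\A^1(x,x)=\cA(x,x)$ is the identity of $\cA$ (legitimate since $\act{1}{x}=x$ strictly). The composition morphism $\A^h(y,z)\otimes\pact{h}{\A^g(x,y)}\to\A^{gh}(x,z)$ is the pasting of three maps: (i) the hom-action of the $\cW$-functor $\pact{h}{-}$ on the second factor, $\pact{h}{\A^g(x,y)}=\act{h}{\cA}(\act{g}{x},y)\to\cA(\pact{h}{\act{g}{x}},\act{h}{y})$; (ii) composition in $\cA$, $\cA(\act{h}{y},z)\otimes\cA(\pact{h}{\act{g}{x}},\act{h}{y})\to\cA(\pact{h}{\act{g}{x}},z)$; and (iii) precomposition with the component $\fy_{g,h,x}:\act{gh}{x}\too\pact{h}{\act{g}{x}}$, landing in $\cA(\act{gh}{x},z)=\A^{gh}(x,z)$. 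When either $g$ or $h$ is $1$ the relevant action functor and $\fy$ are strictly identities, so this composite reduces to composition in $\cA$; this is what makes the strict-unit clauses of a $\twgcw$-bicategory hold on the nose.

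The coherence $2$-cells are then forced by those of $\cA$ together with the action data. The unitors of $\A$ come from the unitors of $\cA$ via the normalization just noted, and the associator of $\A$ is a pasting assembled from the associator of $\cA$, the pseudofunctoriality constraints of the action $\cW$-functors $\pact{h}{-}$, the pseudonaturality constraints of $\fy$, and---the essential ingredient---the invertible modification $\zeta$, which is exactly the datum comparing the two ways of reassociating a triple composite once the copies of $\fy$ have been inserted. I expect the verification of the associativity pentagon and the unit axioms for $\A$ to be the main obstacle: they reduce to the pentagon and triangle of $\cA$, the coherence of the action pseudofunctors, the adjoint-equivalence coherence of $\fy$, and the $4$-simplex axiom on $\zeta$, but pasting all of these into the single pentagon of $\A$ is a long (if routine) diagram chase. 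This is precisely the weakened shadow of the associativity and unit laws that hold automatically for a virtual $T$-algebra by \cite{cs:multicats}, so no genuinely new coherence input is needed beyond the axioms already imposed on $\fy$ and $\zeta$.

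Finally I would exhibit the copowers. For each $g\in G$ set $\twu{g}\odot x\coloneqq\act{g}{x}$, with universal $g$-variant morphism $\chi_{g,x}=1_{\act{g}{x}}\in\cA(\act{g}{x},\act{g}{x})=\A^g(x,\act{g}{x})$. Unwinding the formula for $\homrbare$ from \cref{sec:opposites}, the induced comparison $\A^h(\act{g}{x},y)\to(\homr{\twu{g}}{\A(x,y)})(h)\cong\A^{gh}(x,y)$ is precomposition with $\fy_{g,h,x}$, namely the functor $\cA(\pact{h}{\act{g}{x}},y)\to\cA(\act{gh}{x},y)$. Since $\fy$ is a pseudonatural adjoint equivalence, each component $\fy_{g,h,x}$ is an equivalence in $\cA$, so precomposition with it is an equivalence of hom-categories---exactly the universal property required of a copower in a $\twgcw$-bicategory. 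As this holds at every level $h$ and for every twisted unit $\twu{g}$, the bicategory $\A$ has copowers by all of them.
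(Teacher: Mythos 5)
Your proposal matches the paper's proof essentially step for step: the same three-stage composition (hom-action of $\pact{h}{-}$, composition in \cA, precomposition with $\fy_{g,h}$), the same assembly of the associator from the constraints of \cA, the action pseudofunctors, pseudonaturality of \fy, and \ze, the same identification of the pentagon's sole non-naturality input as the 4-simplex axiom for \ze (the paper's Figure~\ref{fig:pentagon} is exactly the ``long but routine'' chase you predict, with every region a naturality square except the one instance of that axiom), and the same copower argument via $\chi_{g,x} = 1_{\act{g}{x}}$ and the equivalence $-\circ\fy_{g,h}$. One small correction: when $g=1$ but $h\neq 1$ the action functor $\pact{h}{-}$ is \emph{not} the identity (only $\fy_{1,h}$ is), so the composite does not literally reduce to composition in \cA in that case, and one of the two unitors of \A must therefore involve the unit constraint of the pseudofunctor $\pact{h}{-}$ in addition to the unitor of \cA, as the paper notes explicitly---a harmless slip that does not affect the rest of your argument.
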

\begin{proof}
  We define the composition morphisms as follows:
  \[\begin{array}{rcl}
      \A^h(y,z) \otimes \pact{h}{\A^g(x,y)}
      &=& \cA(\act h y,z) \otimes \pact{h}{\cA(\act g x,y)}\\
      &=& \cA(\act h y,z) \otimes \act{h}\cA(\act g x,y)\\
      &\xto{\pact h{-}}& \cA(\act h y,z) \otimes \cA(\pact{h}{\act g x},\act h y)\\
      &\xto{\mathrm{comp}}& \cA(\pact{h}{\act g x}, z)\\
      &\xto{-\circ \fy_{g,h}}& \cA(\act{gh}x,z)\\
      &=&\A^{gh}(x,z)
    \end{array}\]
    Informally (or, formally, in an appropriate internal ``linear type
theory'' of \cW), we can say that the composite of $\beta\in\A^h(y,z)$
and $\alpha\in \pact h{\A^g(x,y)}$ is
  \[\beta \circ \act h\alpha \circ \fy_{g,h}\]
  where $\circ$ denotes composition in \cA.  Expressed in the same
  way, the associator for $\al\in \pact{hk}{\A^g(x,y)}$,
  $\be\in \pact{k}{\A^h(y,z)}$, and $\gm\in\A^k(z,w)$ is
  \begin{align*}
    (\gm \circ \act k \beta \circ \fy_{h,k}) \circ \act {hk} \alpha \circ \fy_{g,hk}
    &\cong \gm\circ \act k \beta \circ \pact k {\act h \alpha} \circ \fy_{h,k} \circ \fy_{g,hk}\\
    &\cong \gm\circ \act k \beta \circ \pact k {\act h \alpha} \circ \act k {\fy_{g,h}} \circ \fy_{gh,k}\\
    &\cong \gm \circ \pact k{\beta \circ \act h \alpha \circ \fy_{g,h}} \circ \fy_{gh,k}
  \end{align*}
  using the naturality of \fy, the modification $\ze$, and the
  functoriality of $\pact k-$ (and omitting the associativity
  isomorphisms of \cA, by coherence for bicategories).

  For the unit, since $\A^1(x,y)= \cA(x,y)$, the unit map
  $\unit \to \A^1(x,y)$ is just the unit of \cA.  One unit isomorphism
  is just that of \cA, while the other is that of \cA together with
  the unit isomorphism of the pseudofunctor $\pact g{-}$.  And the
  associator appearing in the unit axiom has $g=k=1$, so all the
  $\fy$'s collapse and it is essentially trivial, and the unit axiom
  follows immediately from that of \cA.

  To show that \A is a \twgcw-bicategory, it remains to consider the
  pentagon axiom.  Omitting $\circ$ from now on, the pentagon axiom is
  an equality of two morphisms
  \[ \delta \act \ell \gamma \fy_{k,\ell} \act{k\ell}\beta \fy_{h,k\ell} \act{hk\ell}\alpha \fy_{g,hk\ell}
    \longrightarrow
    \delta\pact \ell{\gamma\pact k{\beta \act h \alpha \fy_{g,h}} \fy_{gh,k}} \fy_{ghk,\ell}
    \]
    By naturality of the functoriality isomorphisms for the actions
    $\pact{g}{-}$, we can certainly push all applications of them to
    the end where they will be equal; thus it suffices to compare the
    morphisms
  \[ \delta \act \ell \gamma \fy_{k,\ell} \act{k\ell}\beta \fy_{h,k\ell} \act{hk\ell}\alpha \fy_{g,hk\ell}
    \longrightarrow
    \delta\act \ell\gamma \pact \ell{\act k{\beta}} \pact\ell{\pact k{\act h \alpha}} \pact \ell{\act k{\fy_{g,h}}} \act\ell{\fy_{gh,k}} \fy_{ghk,\ell}
    \]
    This is done in Figure~\ref{fig:pentagon}, where most of the
    regions are naturality, except for the one at the bottom left
    which is the 4-simplex axiom for $\ze$.
  \begin{figure}
    \centering
    \[\hspace{-2cm}\xymatrix@C=1pc{
        \delta \act \ell \gamma \fy_{k,\ell} \act{k\ell}\beta \fy_{h,k\ell} \act{hk\ell}\alpha \fy_{g,hk\ell}
        \ar[d] \ar[r] &
        \delta \act \ell \gamma  \pact\ell{\act{k}\beta} \fy_{k,\ell} \fy_{h,k\ell} \act{hk\ell}\alpha \fy_{g,hk\ell}
        \ar[r]^-\ze \ar[d] &
        \delta \act \ell \gamma  \pact\ell{\act{k}\beta} \act\ell{\fy_{h,k}} \fy_{hk,\ell} \act{hk\ell}\alpha \fy_{g,hk\ell}
        \ar[dd]\\
        \delta \act \ell \gamma \fy_{k,\ell} \act{k\ell}\beta  \pact{k\ell}{\act{h}\alpha} \fy_{h,k\ell} \fy_{g,hk\ell}
        \ar[d]_\ze \ar[r] &
        \delta \act \ell \gamma \pact\ell{\act{k}\beta} \fy_{k,\ell}   \pact{k\ell}{\act{h}\alpha} \fy_{h,k\ell} \fy_{g,hk\ell}
        \ar[ddl]^\ze \ar[dd] &
        \\
        \delta \act \ell \gamma \fy_{k,\ell} \act{k\ell}\beta  \pact{k\ell}{\act{h}\alpha} \act{k\ell}{\fy_{g,h}} \fy_{gh,k\ell}
        \ar[d] 
        & &
        \delta \act \ell \gamma  \pact\ell{\act{k}\beta} \act\ell{\fy_{h,k}} \pact\ell{\act{hk}\alpha} \fy_{hk,\ell} \fy_{g,hk\ell}
        \ar[dd]^\ze \ar[ddl]
        \\
        \delta \act \ell \gamma \pact\ell{\act{k}\beta} \fy_{k,\ell}\pact{k\ell}{\act{h}\alpha} \act{k\ell}{\fy_{g,h}} \fy_{gh,k\ell}
        \ar[d] &
        \delta \act \ell \gamma \pact\ell{\act{k}\beta} \pact{\ell}{\pact{k}{\act{h}\alpha}} \fy_{k,\ell} \fy_{h,k\ell} \fy_{g,hk\ell}
        \ar[d]^\ze \ar[dl]^\ze
        & \\
        \delta \act \ell \gamma \pact\ell{\act{k}\beta} \pact{\ell}{\pact{k}{\act{h}\alpha}} \fy_{k,\ell} \act{k\ell}{\fy_{g,h}} \fy_{gh,k\ell}
        \ar[d] &
        \delta \act \ell \gamma  \pact\ell{\act{k}\beta} \pact\ell{\pact{k}{\act{h}\alpha}} \act\ell{\fy_{h,k}} \fy_{hk,\ell} \fy_{g,hk\ell}
        \ar[d]_{\ze}
        &
        \delta \act \ell \gamma  \pact\ell{\act{k}\beta} \act\ell{\fy_{h,k}} \pact\ell{\act{hk}\alpha} \pact\ell{\fy_{g,hk}} \fy_{ghk,\ell}
        \ar[dl] \\
        \delta \act \ell \gamma \pact\ell{\act{k}\beta} \pact{\ell}{\pact{k}{\act{h}\alpha}} \pact\ell{\act{k}{\fy_{g,h}}} \fy_{k,\ell} \fy_{gh,k\ell}
        \ar[d]_{\ze} &
        \delta \act \ell \gamma  \pact\ell{\act{k}\beta} \pact\ell{\pact k {\act{h}\alpha}} \act\ell{\fy_{h,k}} \pact\ell{\fy_{g,hk}} \fy_{ghk,\ell}
        \ar[dl]_-{\act\ell\ze}
        &
        \\
        \delta\act \ell\gamma \pact \ell{\act k{\beta}} \pact\ell{\pact k{\act h \alpha}} \pact \ell{\act k{\fy_{g,h}}} \act\ell{\fy_{gh,k}} \fy_{ghk,\ell}
        & &
      }\hspace{-2cm} \]
    \caption{The pentagon axiom}
    \label{fig:pentagon}
  \end{figure}

  Now we must show that \A has copowers by the twisted units; of
  course we will use $\act g x$ as the copower $\twu{g} \odot x$.
  Since $\A^g(x,\act g x) = \cA(\act g x,\act g x)$ by definition, for
  $\chi_{g,x}$ we can take the identity map of $\act g x$ in \cA.  By
  definition of composition in \A, the induced precomposition map
  \[\A^h(\act g x,y) \to \A^{gh}(x,y)\]
  is essentially just precomposition with $\fy$:
  \[ \cA(\pact h{\act g x},y) \to \cA(\act{gh} x,y) \]
  and hence is an equivalence.
  Thus, \A has copowers by the twisted units.
\end{proof}

Inspecting the construction, we also conclude:

\begin{sch}\label{thm:bicat-2cat}
  If a 2-category \cA has a twisted $G$-action in the sense of
  \cref{sec:genmulti}, and we regard this as a weak twisted $G$-action
  in the sense defined above with the actions being strict functors,
  $\fy$ strictly natural, and $\ze$ an identity, then the
  \twgcw-bicategory constructed in \cref{thm:bicat} is actually a
  strict \twgw-category, and this construction agrees with the one in
  \S\ref{sec:genmulti}--\ref{sec:contr-thro-enrichm}.  In particular,
  if \cA is a 2-category with a strong duality involution, and we
  regard it as a bicategory with a weak duality involution to
  construct a bicategory with contravariance \A, the result is the
  2-category with contravariance we already obtained from it in
  \cref{sec:genmulti}.\qed
\end{sch}

With some more work we could enhance \cref{thm:bicat} to a whole
equivalence of tricategories.  However, all we will need for our
coherence theorem, in addition to \cref{thm:bicat} and
\cref{thm:bicat-2cat}, is to go backwards on biequivalences.

Before stating such a theorem, we have to define what we want to get
out of it.  Suppose \cA and \cB are \cW-bicategories with twisted
$G$-action; by a \textbf{twisted $G$-functor} $F:\cA\to\cB$ we mean a
functor of \cW-bicategories together with:
\begin{itemize}[leftmargin=2em]
\item For each $g\in G$, a \cW-pseudonatural adjoint equivalence
  \[\vcenter{\xymatrix{
        \act g\cA\ar[d]_{\pact g -}\ar[r]^-{\act g F} \drtwocell\omit{\fii} &
        \act g \cB\ar[d]^{\pact g -}\\
        \cA\ar[r]_-F &
        \cB.
      }}\]
\item For each $g,h\in G$, an invertible \cW-modification
  \[\vcenter{\xymatrix@R=3pc@C=3pc{
        && {\act {gh}\cA} \ar[dl]_{\pact h{\pact g -}} \ar[r]^{\act {gh}F} \dtwocell\omit{\fii}
        & \act {gh}\cB \ar[dl]|{\pact h{\pact g -}} \ddluppertwocell^{\mathrlap{\pact{gh}{-}}}{\fy}\\
        &\act h\cA\ar[d]_{\pact h -}\ar[r]|-{\act h F} \drtwocell\omit{\fii} &
        \act h\cB\ar[d]|{\pact h -}\\
        &\cA\ar[r]_-F &
        \cB
      }}
    \quad \overset{\theta}{\Longrightarrow}\quad
    \vcenter{\xymatrix@R=3pc@C=3pc{
        & {\act {gh}\cA} \ar[dl]|{\pact h{\pact g -}} \ddluppertwocell^{\mathrlap{\pact{gh}{-}}}{\fy}
        \ar[r]^{\act {gh}F} &
        \act{gh}{\cB}  \ddluppertwocell^{\mathrlap{\pact{gh}{-}}}{\fii} \\
        \act h\cA\ar[d]|{\pact h -}\\
        \cA \ar[r]_{F} & \cB
      }}
  \]
  (As before, $\pact h{\pact g -}$ denotes the functorial action of
  the homwise endofunctor $\pact h -$ of \cW-bicategories on the given
  action functor $\pact g -: \act h \cA \to \cA$.)  This can be
  written formally as
  \[ \fii_h \circ \act h {\fii_g} \circ \fy^\cB_{g,h} \cong \fy^\cA_{g,h} \circ \fii_{gh} \]
\item For all $g,h,k\in G$, an axiom holds that can be written
  formally as the commutative diagram shown in Figure~\ref{fig:theta-ax}.
  \begin{figure}
    \centering
    \[ \xymatrix{\fii_k \act k {\fii_h} \pact k{\act h {\fii_g}} \fy^\cB_{h,k} \fy^\cB_{g,hk} \ar[r] \ar[d]_\ze &
      \fii_k \act k {\fii_h} \fy^\cB_{h,k} \act {hk}{\fii_g} \fy^\cB_{g,hk} \ar[r]^\theta &
      \fy^\cA_{h,k} \fii_{hk} \act {hk}{\fii_g} \fy^\cB_{g,hk} \ar[d]^\theta \\
      \fii_k \act k {\fii_h} \pact k{\act h {\fii_g}} \act k{\fy^\cB_{g,h}} \fy^\cB_{gh,k} \ar[d] &
      & \fy^\cA_{h,k} \fy^\cA_{g,hk} \fii_{ghk} \ar[d]^\zeta \\
      \fii_k \pact k {\fii_h {\act h {\fii_g}} {\fy^\cB_{g,h}}} \fy^\cB_{gh,k} \ar[d]_\theta  & &
      \pact k {\fy^\cA_{g,h}} \fy^\cA_{gh,k} \fii_{ghk}\\
      \fii_k \pact k { {\fy^\cA_{g,h}} \fii_{gh}} \fy^\cB_{gh,k} \ar[r] &
      \fii_k \pact k {\fy^\cA_{g,h}} \pact k{\fii_{gh}} \fy^\cB_{gh,k} \ar[r] &
      \pact k {\fy^\cA_{g,h}} \fii_k \pact k{\fii_{gh}} \fy^\cB_{gh,k} \ar[u]_\theta
      & }
    \]
    \caption{The axiom for $\theta$}
    \label{fig:theta-ax}
  \end{figure}
\end{itemize}

\begin{thm}\label{thm:biequiv}
  Suppose \cA and \cB are \cW-bicategories with twisted $G$-action,
  with resulting \twgcw-bicategories \A and \B.  If \A and \B are
  biequivalent as \twgcw-bicategories, then \cA and \cB are
  biequivalent by a twisted $G$-functor.
  
  In particular, if two bicategories \cA and \cB with duality
  involution give rise to biequivalent
  bicategories-with-contravariance, then \cA and \cB are biequivalent
  by a duality pseudofunctor.
\end{thm}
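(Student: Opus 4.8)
The plan is to produce the twisted $G$-functor by extracting the covariant part of the given $\twgcw$-biequivalence and then transporting the $G$-actions along it. Recall from \cref{thm:bicat} that the hom-objects of $\A$ are $\A^g(x,y)=\cA(\act g x,y)$, so that the unit-variance homs recover $\cA$ on the nose: $\A^1(x,y)=\cA(x,y)$, and similarly for $\B$. A $\twgcw$-biequivalence $F\colon\A\to\B$ acts on objects by a function $\ob\cA\to\ob\cB$, and its variance-$1$ component $F\colon\A^1(x,y)\to\B^1(Fx,Fy)$, together with the variance-$1$ parts of its unit and functoriality constraints, is exactly a $\cW$-pseudofunctor $F_0\colon\cA\to\cB$. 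Since the monoidal unit $\twu 1$ is concentrated in variance $1$, an internal equivalence in $\A$ is built from covariant ($1$-variance) $1$-cells and so is the same datum as an equivalence in $\cA$; hence biessential surjectivity of $F$ coincides with that of $F_0$, and local equivalence of $F$ in variance $1$ is local equivalence of $F_0$. Thus $F_0$ is already a biequivalence of $\cW$-bicategories, and it remains only to equip it with twisted $G$-functor structure.

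For the data $\fii$, I use that the copowers $\twu g\odot x=\act g x$ of \cref{thm:bicat} are \emph{absolute}: each twisted unit $\twu g$ is dualizable, with dual $\twu{g^{-1}}$, so as observed at the end of \cref{sec:opposites} these copowers are Cauchy colimits and are preserved by every $\twgcw$-functor in the sense of~\cite{gs:freecocomp}. Consequently $F(\act g x)$ is again a copower of $Fx$ by $\twu g$; but $\act{g}{(Fx)}$ is such a copower as well, by \cref{thm:bicat} applied to $\cB$. The essential uniqueness of copowers then yields an equivalence $\fii_{g,x}\colon F(\act g x)\simeq\act{g}{(Fx)}$ compatible with the universal maps $\chi_{g,-}$ on each side, and since a $1$-cell out of a copower is determined up to canonical isomorphism by its restriction along $\chi$, the pseudonaturality constraints of $\fii_g$ are forced by those of $F$. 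Promoting each $\fii_g$ to an adjoint equivalence gives the required $\cW$-pseudonatural fillers of the defining squares.

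The modification $\theta$ and its axiom (Figure~\ref{fig:theta-ax}) are then governed by the same uniqueness principle, and I expect this coherence bookkeeping, rather than any genuinely new computation, to be the main point requiring care. Both composites of comparison equivalences that $\theta$ must compare are maps between the iterated copowers $\twu h\odot(\twu g\odot x)$ and their reindexing, built from the canonical isomorphisms $\twu g\otimes\twu h\cong\twu{gh}$ and the copower associativity $\varpi\odot(\om\odot x)\cong(\varpi\otimes\om)\odot x$ of \cref{sec:opposites}. As any two such comparisons between absolute colimits agree up to unique coherent isomorphism, $\theta$ exists and is unique, and its $4$-simplex axiom holds automatically, being an equation of coherence $2$-cells out of such a colimit. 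This completes the twisted $G$-functor structure on $F_0$ and proves the general statement; the final clause is the specialization $\cW=\cCat_g$, $G=\{+,-\}$, where a twisted $G$-action is a weak duality involution and, as recorded in \cref{sec:bicat-contra}, a twisted $G$-functor is precisely a duality pseudofunctor, so that a $\twgcw$-biequivalence $\A\simeq\B$ delivers a duality pseudofunctor $\cA\to\cB$ that is a biequivalence.
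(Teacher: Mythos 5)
Your proposal is correct, and its skeleton matches the paper's proof: extract the covariant biequivalence from the variance-$1$ parts (the paper says simply that $F$ ``is a biequivalence on the $1$-parts''), build $\fii$ from the fact that $F$ carries the copower $\act g x$ to a copower of $Fx$ by $\twu g$, and then produce $\theta$ and verify Figure~\ref{fig:theta-ax}. Indeed your uniqueness-of-copowers construction of $\fii_{g,x}$ is literally the same $1$-cell the paper writes down, since the comparison equivalence between the two copowers is exactly the composite of the $g$-variant equivalence $Fx \tovar{g} F(\act g x)$ (the image under $F$ of $\chi_{g,x}$ and its inverse) with the inverse of $Fx \tovar{g} \pact{g}{Fx}$; the paper just phrases preservation via ``$g$-variant isomorphisms'' rather than via dualizability of $\twu g$ and absoluteness in the sense of~\cite{gs:freecocomp}. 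Where you genuinely diverge is the last step: the paper proves the axiom of Figure~\ref{fig:theta-ax} by writing out the two long composites of variant equivalences explicitly and reducing both by naturality and cancellation to a common normal form (a computation it partly leaves to the reader), whereas you deduce it from an essential-uniqueness principle for comparisons of absolute (bi)colimits. Your principle is sound for the right reason: restriction along $\chi_{ghk,Fx}$ is an \emph{equivalence} $\B^1(\pact{ghk}{Fx},Y)\to\B^{ghk}(Fx,Y)$, in particular fully faithful, so parallel $2$-cells between $1$-cells out of the copower agree iff their restrictions do. But note that ``holds automatically'' still conceals the check that each constituent constraint ($\ze$, the components of $\theta$, the pseudonaturality and functoriality isomorphisms) is compatible, after restriction, with the chosen comparison data --- this is the same routine bookkeeping the paper defers to the reader, just organized as a clique/contractibility argument instead of an explicit reduction. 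What your route buys is conceptual economy and independence from the particular presentation of $\fy$ and $\ze$; what the paper's route buys is an explicit normal form exhibiting the common value of both sides, which is what one would need if the coherence data were to be used in further calculations.
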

\begin{proof}
  Let $F:\A\to\B$ be a \twgcw-biequivalence.  In particular, then, it
  is a biequivalence on the $1$-parts, hence a biequivalence
  $\cA\simeq \cB$.
  
  Now by \cref{thm:bicat}, for any $x\in \cA$ we have a ``$g$-variant
  equivalence'' $x\tovar{g} \act g x$ with inverse
  $\act g x \tovar{g^{-1}} x$.  This structure is preserved by $F$, so
  we have a $g$-variant equivalence between $Fx$ and $F(\act g x)$.
  But we also have a $g$-variant equivalence between $Fx$ and
  $\pact{g}{Fx}$, and composing them we obtain an ordinary
  ($1$-variant) isomorphism $\pact{g}{Fx}\cong F(\act g x)$.  These
  supply the components of $\fii$; their pseudo\-naturality is
  straightforward.

  Now, by construction of the copowers by twisted units, it follows
  that $\fy_{g,h} : \act{gh}{x} \to \pact h {\act g x}$ is isomorphic
  to the composite of the variant equivalences
  \[ \act{gh}{x} \tovar{(gh)^{-1}} x \tovar{g} \act g x \tovar{h} \pact h{\act g x}
  \]
  while $\ze$ is obtained by canceling and uncanceling some of these
  equivalences.  In particular, when $\fy$ is composed with $\fii$, we
  can simply cancel some inverse variant equivalences to obtain the
  components of $\theta$.  As for Figure~\ref{fig:theta-ax}, its
  source is
  \begin{multline*}
    \pact{ghk}{F(x)}
    \tovar{(ghk)^{-1}} F(x)
    \tovar{g} \pact g{F(x)}
    \tovar{hk} \pact{hk}{\pact g{F(x)}}\\
    \tovar{(hk)^{-1}} \pact g {F(x)}
    \tovar{h} \pact h{\pact g {F(x)}}
    \tovar{k} \pact k{\pact h{\pact g {F(x)}}}\\
    \tovar{g^{-1}} \pact k{\pact h{F(x)}}
    \tovar{g} \pact k{\pact h{F({\act g x})}}\\
    \tovar{h^{-1}} \pact k{F({\act g x})}
    \tovar{h} \pact k{F(\pact h{\act g x})}
    \tovar{k^{-1}} {F(\pact h{\act g x})}
    \tovar{k} F(\pact k{\pact h{\act g x}})
  \end{multline*}
  while its target is
  \begin{multline*}
    \pact{ghk}{F(x)}
    \tovar{(ghk)^{-1}} F(x)
    \tovar{ghk} F(\act{ghk} x)\\
    \tovar{(ghk)^{-1}} F(x)
    \tovar{gh} F(\act{gh}x)
    \tovar{k} F(\pact{k}{\act{gh}{x}})\\
    \tovar{(gh)^{-1}} F(\act{k}{x})
    \tovar{g} F(\pact{k}{\act{g}{x}})
    \tovar{h} F(\pact{k}{\pact{h}{\act{g}x}})
  \end{multline*}
  Here we have applied functors such as $\pact k-$ to variant
  morphisms; we can define this by simply ``conjugating'' with the
  variant equivalences $x\tovar{k} \act k x$.  We leave it to the
  reader to apply naturality and cancel all the redundancy in these
  composites, reducing them both to
  \[ \pact{ghk}{F(x)} \tovar{(ghk)^{-1}} F(x)
    \tovar{g} F({\act{g}{x}})
    \tovar{h} F(\pact{h}{\act{g}x})
    \tovar{k} F(\pact{k}{\pact{h}{\act{g}x}}) \]
  so that they are equal.
\end{proof}

Therefore, to strictify a bicategory with duality involution, it will
suffice to strictify its corresponding bicategory with contravariance.
This is the task of the next, and final, section.

\section{Coherence for enriched bicategories}
\label{sec:coherence}

We could continue in the generality of $G$ and \cW, but there seems
little to be gained by it any more.

\begin{thm}\label{thm:bicat-coherence}
  Any bicategory with contravariance is biequivalent to a 2-category
  with contravariance.
\end{thm}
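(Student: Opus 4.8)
The plan is to prove this by the enriched analogue of the classical Yoneda-embedding proof of coherence for bicategories, in which a bicategory $\cA$ embeds into the strict $2$-category $[\cA\op,\cCat]$ of presheaves. Here the role of $\cCat$ is played by the self-enriched base $\underline{\cV}$, which—exactly as the $1$-categorical self-enrichment $\uV$ was built in \cref{sec:opposites} from the right hom $\homrbare$—is a strict $\cV$-category, i.e.\ a genuine $2$-category with contravariance. The essential point, as in the unenriched case, is that presheaves valued in a \emph{strict} enriched category again form a \emph{strict} enriched category, so that our given bicategory with contravariance—a priori only a $\cV$-bicategory—becomes biequivalent to a strict $\cV$-category.

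First I would assemble the $\cV$-enriched presheaf construction, which I denote $\cP\A$. Since $\cV$ is not symmetric, $\A\op$ does not exist, so in place of presheaves $\A\op\to\underline{\cV}$ I would use right $\A$-modules, sending $x\in\A$ to the representable module $\A(-,x)$; the enriched-bicategory theory of \cite{gs:freecocomp} supplies the relevant weighted bicolimits and, crucially, the bicategorical enriched Yoneda lemma. The hom-object $(\cP\A)(M,N)$ between two modules is a weighted bilimit built from the right homs $\homr{M(x)}{N(x)}$—schematically an end $\int_x \homr{M(x)}{N(x)}$—formed using $\homrbare$ consistently, exactly as forced by the copower formula \eqref{eq:copower}. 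Because the target $\underline{\cV}$ is a \emph{strict} $\cV$-category, composition of module morphisms is computed componentwise and is therefore strictly associative and unital: thus $\cP\A$ is a strict $\cV$-category, i.e.\ a $2$-category with contravariance. This is the enriched counterpart of the fact that $[\cA\op,\cCat]$ is a strict $2$-category.

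The bicategorical enriched Yoneda lemma then yields an equivalence $(\cP\A)(yx,yy)\simeq \A(x,y)$ of objects of $\cV$ (hence in both variances at once), so the Yoneda $\cV$-functor $y\colon \A\to\cP\A$ is a local equivalence. I would then restrict to the full sub-$\cV$-category $\A'$ of $\cP\A$ spanned by the representables. By construction $y\colon\A\to\A'$ is bijective on objects, and it is a local equivalence, hence a $\cV$-biequivalence; and a full sub-$\cV$-category of a strict $\cV$-category is again strict, so $\A'$ is a $2$-category with contravariance, giving the desired biequivalence.

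The main obstacle is purely in the enriched bicategorical bookkeeping rather than in any new idea: one must set up the presheaf construction and Yoneda lemma over the \emph{non-symmetric} base $\cV$, being careful to use the right hom $\homrbare$ throughout (as already forced in \cref{sec:opposites}), and one must check that passing to the bicategorical level does not break the strictness argument—that is, that $\underline{\cV}$ really is a strict $\cV$-category and that module homs into it inherit strict composition. Both points are instances of the general enriched-bicategory machinery of \cite{gs:freecocomp}, so I would lean on that reference for the Yoneda lemma and the requisite weighted bilimits rather than redo the coherence bookkeeping by hand; the only genuinely new observation is the strictness of $\cP\A$, which follows formally from strictness of the base.
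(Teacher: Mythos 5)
Your proposal is correct and coincides with the proof the paper actually gives: the paper likewise invokes the \cV-bicategory of (moderate) \A-modules and the fully faithful Yoneda embedding from \cite[9.3--9.6]{gs:freecocomp}, observes that strictness and closed completeness of \cV make the module \cV-bicategory a strict \cV-category (hence so is the full subcategory on representables), and concludes the biequivalence. The only differences are cosmetic --- the paper flags the size restriction by working with \emph{moderate} modules and also mentions, without developing it, an alternative algebraic proof via the coherence theorem for pseudo-algebras as in \cite{shulman:psalg}.
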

\begin{proof}
  Just as there are two ways to prove the coherence theorem for
  ordinary bicategories, there are two ways to prove this coherence
  theorem.  The first is an algebraic one, involving formally adding
  strings of composable arrows that hence compose strictly
  associatively.  This can be expressed abstractly using the same
  general coherence theorem for pseudo-algebras over a 2-monad that we
  used in \cref{sec:2-monadic-approach}.  As sketched at the end
  of~\cite[\S4]{shulman:psalg}, this theorem (or a slight
  generalization of it) applies as soon as we observe that our
  2-category \cV is closed monoidal and cocomplete.

  The other method is by a Yoneda embedding.  To generalize this to
  the enriched (and non-symmetric) case, first note that for any
  \cV-bicategory \A, by~\cite[9.3--9.6]{gs:freecocomp} we have a
  \cV-bicategory $\cM\A$ of \emph{moderate \A-modules}, and a Yoneda
  embedding $\A \to \cM\A$ that is fully faithful.  Thus, \A is
  biequivalent to its image in $\cM\A$.  However, since \cV is a
  strict 2-category that is closed and complete, $\cM\A$ is actually a
  strict \cV-category, and hence so is any full subcategory of it.

  Explicitly, an \A-module consists of categories $F\p(x)$ and
  $F\m(x)$ for each $x\in\A$ together with actions
  \begin{align*}
    F\p(y) \times \A\p(x,y) &\to F\p(x)\\
    F\p(y) \times \A\m(x,y) &\to F\m(x)\\
    F\m(y) \times \A\p(x,y)\op &\to F\m(x)\\
    F\m(y) \times \A\m(x,y)\op &\to F\p(x)
  \end{align*}
  and coherent associativity and unitality isomorphisms.  A covariant
  \A-module morphism consists of functors $F\p(x) \to G\p(x)$ and
  $F\m(x)\to G\m(x)$ that commute up to coherent natural isomorphism
  with the actions, while a contravariant one consists similarly of
  functors $F\p(x)\op \to G\m(x)$ and $F\m(x)\op\to G\p(x)$.  Since
  \cCat is a strict 2-category, the bicategory-with-contravariance of
  modules is in fact a strict 2-category with contravariance.  The
  Yoneda embedding, of course, sends each $z\in \A$ to the
  representable $Y_z$ defined by $Y_z\p(x) \coloneqq \A\p(x,z)$ and
  $Y_z\m(x)\coloneqq \A\m(x,z)$.
\end{proof}

We have almost completed our trip over the ladder; it remains to make
the following observation and then put all the pieces together.

\begin{thm}\label{thm:opposites-weaktostrict}
  If \A is a 2-category with contravariance that has \emph{weak}
  opposites, then it is biequivalent to a 2-category with
  contravariance having strict opposites.
\end{thm}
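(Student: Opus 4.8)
The plan is to realize \A as biequivalent to its free cocompletion under strict opposites. Recall from \cref{sec:opposites} that a strict opposite of $x$ is a copower $\du\odot x$ by the dual unit \du, and that since $\du\otimes\du\cong\unit$ the object \du is dualizable, so these copowers are \emph{absolute} (Cauchy) colimits. Hence the free cocompletion of \A under copowers by \du exists and is idempotent; I would realize it as the full sub-2-category-with-contravariance $\A'$ of the 2-category-with-contravariance of \A-modules (built as in the proof of \cref{thm:bicat-coherence}, but over the strict \A) spanned by the representables $Y_x$ together with their strict opposites $\du\odot Y_x$, idempotence (that is, $\du\odot(\du\odot Y_x)\cong Y_x$) being exactly $\du\otimes\du\cong\unit$. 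Because \V is closed and cocomplete, $\A'$ is again a strict 2-category with contravariance, and by construction it has strict opposites. The Yoneda embedding $Y:\A\to\A'$ is fully faithful, so it suffices to prove that $Y$ is essentially surjective, and hence a biequivalence.

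Every object of $\A'$ is either a representable $Y_x$---plainly in the image of $Y$---or a strict opposite $\du\odot Y_x$, so essential surjectivity reduces to exhibiting, for each $x$, an equivalence $\du\odot Y_x\simeq Y_{x\o}$ in $\A'$, where $x\o$ is a \emph{weak} opposite of $x$ chosen inside \A. I would produce this by a Yoneda argument comparing the two objects through maps \emph{into} them. Because \du is self-dual, the strict copower $\du\odot Y_x$ is simultaneously a strict power, so its universal property gives, for every $y\in\A$,
\[ \A'(Y_y,\du\odot Y_x)\;\cong\;\homl{\du}{\A'(Y_y,Y_x)}\;=\;\homl{\du}{\A(y,x)}, \]
using full faithfulness of $Y$. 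On the other hand $x\o$ is a weak opposite, hence a weak power by the self-dual \du, so
\[ \A'(Y_y,Y_{x\o})\;=\;\A(y,x\o)\;\simeq\;\homl{\du}{\A(y,x)}. \]
Composing yields an equivalence $\A'(Y_y,\du\odot Y_x)\simeq\A'(Y_y,Y_{x\o})$, pseudonatural in $y\in\A$. Since the objects of $\A'$ are absolute colimits of representables, the bicategorical Yoneda lemma (evaluation of a module at $y$ being computed as $\A'(Y_y,-)$) promotes this pointwise equivalence to an equivalence $\du\odot Y_x\simeq Y_{x\o}$ of objects of $\A'$. Therefore $Y$ is essentially surjective, and $\A'$ is the desired 2-category with contravariance having strict opposites.

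The main obstacle is the coherence bookkeeping in this final Yoneda step: one must verify that the comparison $\A'(Y_y,\du\odot Y_x)\simeq\A'(Y_y,Y_{x\o})$ is genuinely pseudonatural in $y$ and compatible with both the covariant and contravariant parts of the \V-enrichment---concretely, that the weak opposite satisfies both $\A\p(y,x\o)\simeq\A\m(y,x)\op$ and $\A\m(y,x\o)\simeq\A\p(y,x)\op$, pseudonaturally---and that density of $Y$ really does license passing from equivalent restricted representables to equivalent objects. Everything else (existence and idempotence of the cocompletion, strictness of $\A'$, its possession of strict opposites, and full faithfulness of $Y$) is formal, resting only on the fact recorded in \cref{sec:opposites} that copowers by the dualizable object \du are absolute colimits.
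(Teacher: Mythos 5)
Your proposal is correct and takes essentially the same route as the paper: realize $\A'$ as the closure of the representables under strict opposites inside the module \cV-category, observe that the Yoneda embedding is \cV-fully-faithful, and get bicategorical essential surjectivity from the fact that $\du\odot Y_x$ and $Y_{x\o}$ are both weak opposites of $Y_x$. The only difference is presentational: where the paper invokes preservation of weak opposites by arbitrary \cV-functors together with their uniqueness up to equivalence, you unwind that uniqueness into an explicit mapping-in computation with the self-dual $\du$, promoted to an equivalence of objects by density of the representables.
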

\begin{proof}
  Let $\A'$ be the free cocompletion of \A, as a strict \cV-category,
  under strict opposites (a strict \cV-weighted colimit).  It is easy
  to see that this can be done in one step, by considering the
  collection of all opposites of representables in the presheaf
  \cV-category of \A.  Thus, the embedding $\A\to\A'$ is
  \cV-fully-faithful, and every object of $\A'$ is the strict opposite
  of something in the image.  However, \A has weak opposites, which
  are preserved by any \cV-functor, and any strict opposite is a weak
  opposite.  Thus, every object of $\A'$ is equivalent to something in
  the image of \A, since they are both a weak opposite of the same
  object.  Hence $\A\to\A'$ is bicategorically essentially surjective,
  and thus a biequivalence.
\end{proof}

Finally, we can prove \cref{thm:main}.

\begin{thm}\label{thm:main2}
  If $\cA$ is a bicategory with a weak duality involution, then there
  is a 2-category $\cA'$ with a strict duality involution and a
  duality pseudofunctor $\cA\to\cA'$ that is a biequivalence.
\end{thm}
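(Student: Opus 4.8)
The plan is to climb the right-hand side of the ladder from \cref{sec:introduction} on the object $\cA'$ we are building, descend the left-hand side starting from $\cA$, and then glue the two halves together with \autoref{thm:biequiv}. The guiding idea is that $\cA'$ will be manufactured precisely so that the bicategory with contravariance it induces is biequivalent to the one induced by $\cA$; once that is arranged, \autoref{thm:biequiv} produces the required duality pseudofunctor for free.

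First I would apply \autoref{thm:bicat}, in the case $\cW=\cCat_g$ and $G=\{+,-\}$, to turn $\cA$ into a bicategory with contravariance $\A$ --- that is, a $\cV$-enriched bicategory --- equipped with weak opposites. By \autoref{thm:bicat-coherence} this $\A$ is biequivalent to a $2$-category with contravariance; since weak opposites are copowers by $\du$ and hence absolute, they are transported along any $\cV$-functor, so this strictification still has weak opposites. Feeding it into \autoref{thm:opposites-weaktostrict} yields a biequivalent $2$-category with contravariance $\B$ having \emph{strict} opposites. Via \autoref{thm:contrav-enriched} and \autoref{thm:gm-2cat}, $\B$ corresponds to a $2$-category $\cA_0$ carrying a \emph{strong} duality involution, and \autoref{thm:2monad-coherence} then strictifies $\cA_0$ to a $2$-category $\cA'$ with a \emph{strict} duality involution, together with a duality $2$-functor $\cA_0\to\cA'$ that is a $2$-equivalence. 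This $\cA'$ is our target, and it already carries the strict duality involution demanded by the statement.

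To obtain the biequivalence $\cA\to\cA'$ as a duality pseudofunctor, I would observe that the same ``going up'' construction applied to $\cA'$ reproduces data we have already seen. Indeed, $\cA'$ is a $2$-category with a strict, hence strong, duality involution, so \autoref{thm:bicat-2cat} identifies the bicategory with contravariance that \autoref{thm:bicat} builds from $\cA'$ with the $\cV$-category obtained from $\cA'$ by the constructions of \cref{sec:genmulti} and the following sections --- a $2$-category with contravariance having strict opposites. Running the correspondences of \autoref{thm:gm-2cat} and \autoref{thm:2monad-coherence} in reverse, the $2$-equivalence $\cA_0\to\cA'$ transports to a biequivalence between $\B$ and this bicategory with contravariance of $\cA'$. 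Composing with the biequivalences of the previous paragraph exhibits the bicategory with contravariance of $\cA'$ as biequivalent to $\A$, the one coming from $\cA$, and the composite can be oriented as a $\twgcw$-biequivalence out of $\A$. Hence \autoref{thm:biequiv} applies and delivers a biequivalence $\cA\to\cA'$ that is a duality pseudofunctor.

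The main obstacle I anticipate is bookkeeping rather than any new idea: one must verify that weak opposites genuinely survive each rung (so that \autoref{thm:opposites-weaktostrict} and \autoref{thm:gm-2cat} may be invoked at the correct stage), and that the several $\cV$-bicategorical biequivalences compose into the single one that \autoref{thm:biequiv} consumes. The one load-bearing identification is \autoref{thm:bicat-2cat}: it is what guarantees that applying the up-the-ladder construction to the strictified $\cA'$ returns, on the nose, the enriched data already in hand, so that no fresh coherence isomorphisms have to be reconciled by hand when we invoke \autoref{thm:biequiv}.
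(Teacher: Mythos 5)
Your proposal is correct and follows essentially the same route as the paper's own proof: up the ladder via \cref{thm:bicat}, \cref{thm:bicat-coherence}, and \cref{thm:opposites-weaktostrict}, across via \cref{thm:gm-2cat}, \cref{thm:contrav-enriched}, and \cref{thm:2monad-coherence}, and finally \cref{thm:biequiv} to upgrade the biequivalence to a duality pseudofunctor. Your explicit appeal to \cref{thm:bicat-2cat} to identify the enriched structure induced by $\cA'$ with the data already constructed is a point the paper leaves implicit, but it is the same argument, just spelled out more carefully.
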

\begin{proof}
  By \cref{thm:bicat}, we can regard \cA as a bicategory with
  contravariance \A having weak opposites.  By
  \cref{thm:bicat-coherence}, it is therefore biequivalent to a
  2-category with contravariance and weak opposites, and therefore by
  \cref{thm:opposites-weaktostrict} also biequivalent to a 2-category
  with contravariance and strict opposites.

  Now by \cref{thm:gm-2cat} and \cref{thm:contrav-enriched}, the
  latter is equivalently a 2-category with a strong duality
  involution.  Thus, by \cref{thm:2monad-coherence} it is equivalent
  to a 2-category with a strict duality involution, say $\cA'$.  So we
  have a biequivalence $\cA\to\cA'$ that is a pseudofunctor preserving
  contravariance, and by \cref{thm:biequiv}, we can also regard it as
  a duality pseudofunctor.
\end{proof}

As mentioned in \cref{sec:introduction}, we could actually dispense
with the right-hand side of the ladder as follows.  Let $\cA'$ be the
full sub-\V-bicategory of $\cM\cA$, as in \cref{thm:bicat-coherence},
consisting of the modules that are \emph{either} of the form $Y_z$ or
of the form $Y_z\o$, where $Y_z\o$ is defined by
$(Y_z\o)\p(x) \coloneqq \A\m(x,z)\op$ and
$(Y_z\o)\m(x) \coloneqq \A\p(x,z)\op$.  This $\cA'$ is a 2-category
with a strict duality involution that interchanges $Y_z$ and $Y_z\o$,
and the Yoneda embedding is a biequivalence for the same reasons.
However, this quicker argument still depends on the description of
weak duality involutions using bicategorical enrichment from
\cref{sec:bicat-contra}, and thus still depends \emph{conceptually} on
the entire picture.

\begin{references*}

\bibitem[BKP89]{bkp:2dmonads}
R.~Blackwell, G.~M. Kelly, and A.~J. Power.
\newblock Two-dimensional monad theory.
\newblock {\em J. Pure Appl. Algebra}, 59(1):1--41, 1989.

\bibitem[BKPS89]{bkps:flexible}
G.~J. Bird, G.~M. Kelly, A.~J. Power, and R.~H. Street.
\newblock Flexible limits for {$2$}-categories.
\newblock {\em J. Pure Appl. Algebra}, 61(1):1--27, 1989.

\bibitem[BR13]{br:cmp-infn-i}
Julia~E. Bergner and Charles Rezk.
\newblock Comparison of models for $(\infty,n)$-categories, {I}.
\newblock {\em Geometry \& Topology}, 17:2163--2202, 2013.
\newblock arXiv:1204.2013.

\bibitem[Car95]{carmody:thesis}
S.M. Carmody.
\newblock {\em Cobordism Categories}.
\newblock PhD thesis, University of Cambridge, 1995.

\bibitem[CS10]{cs:multicats}
G.S.H. Cruttwell and Michael Shulman.
\newblock A unified framework for generalized multicategories.
\newblock {\em Theory Appl. Categ.}, 24:580--655, 2010.
\newblock arXiv:0907.2460.

\bibitem[Day72]{day:refl-closed}
Brian Day.
\newblock A reflection theorem for closed categories.
\newblock {\em J. Pure Appl. Algebra}, 2(1):1--11, 1972.

\bibitem[DS97]{ds:monbi-hopfagbd}
Brian Day and Ross Street.
\newblock Monoidal bicategories and {H}opf algebroids.
\newblock {\em Adv. Math.}, 129(1):99--157, 1997.

\bibitem[Gar14]{garner:F}
Richard Garner.
\newblock Lawvere theories, finitary monads and {Cauchy}-completion.
\newblock {\em Journal of Pure and Applied Algebra}, 218(11):1973--1988, 2014.

\bibitem[Gar18]{garner:emb-tancat}
Richard Garner.
\newblock An embedding theorem for tangent categories.
\newblock {\em Advances in Mathematics}, 323:668 -- 687, 2018.

\bibitem[GP97]{gp:enr-var}
R.~Gordon and A.~J. Power.
\newblock Enrichment through variation.
\newblock {\em J. Pure Appl. Algebra}, 120(2):167--185, 1997.

\bibitem[GP17]{gp:enr-lawvere}
Richard Garner and John Power.
\newblock An enriched view on the extended finitary monad--{Lawvere} theory
  correspondence.
\newblock arXiv:1707.08694, 2017.

\bibitem[GPS95]{gps:tricats}
R.~Gordon, A.~J. Power, and Ross Street.
\newblock Coherence for tricategories.
\newblock {\em Mem. Amer. Math. Soc.}, 117(558):vi+81, 1995.

\bibitem[GS16]{gs:freecocomp}
Richard Garner and Michael Shulman.
\newblock Enriched categories as a free cocompletion.
\newblock {\em Adv. Math}, 289:1--94, 2016.
\newblock arXiv:1301.3191.

\bibitem[Gur12]{gurski:biequiv}
Nick Gurski.
\newblock Biequivalences in tricategories.
\newblock {\em Theory and Applications of Categories}, 26(14):329--384, 2012.

\bibitem[Her01]{hermida:coh-univ}
Claudio Hermida.
\newblock From coherent structures to universal properties.
\newblock {\em J. Pure Appl. Algebra}, 165(1):7--61, 2001.

\bibitem[Joy02]{joyal:q_kan}
A.~Joyal.
\newblock Quasi-categories and {K}an complexes.
\newblock {\em Journal of Pure and Applied Algebra}, 175:207--222, 2002.

\bibitem[Lac95]{lack:thesis}
Stephen Lack.
\newblock {\em The algebra of distributive and extensive categories}.
\newblock PhD thesis, University of Cambridge, 1995.

\bibitem[Lac00]{lack:psmonads}
Stephen Lack.
\newblock A coherent approach to pseudomonads.
\newblock {\em Adv. Math.}, 152(2):179--202, 2000.

\bibitem[Lac02]{lack:codescent-coh}
Stephen Lack.
\newblock Codescent objects and coherence.
\newblock {\em J. Pure Appl. Algebra}, 175(1-3):223--241, 2002.
\newblock Special volume celebrating the 70th birthday of Professor Max Kelly.

\bibitem[Lor16]{loregian:twgw}
Fosco Loregian.
\newblock Answer to {MathOverflow} question ``{Twisted} {Day} convolution''.
\newblock
  \url{http://mathoverflow.net/questions/233812/twisted-day-convolution}, March
  2016.

\bibitem[LS12]{ls:limlax}
Stephen Lack and Michael Shulman.
\newblock Enhanced 2-categories and limits for lax morphisms.
\newblock {\em Advances in Mathematics}, 229(1):294--356, 2012.
\newblock arXiv:1104.2111.

\bibitem[Lur09]{lurie:higher-topoi}
Jacob Lurie.
\newblock {\em Higher topos theory}.
\newblock Number 170 in Annals of Mathematics Studies. Princeton University
  Press, 2009.

\bibitem[Mak96]{makkai:avoiding-choice}
M.~Makkai.
\newblock Avoiding the axiom of choice in general category theory.
\newblock {\em J. Pure Appl. Algebra}, 108(2):109--173, 1996.

\bibitem[Pow89]{power:coherence}
A.~J. Power.
\newblock A general coherence result.
\newblock {\em J. Pure Appl. Algebra}, 57(2):165--173, 1989.

\bibitem[Pow07]{power:3dmonads}
John Power.
\newblock Three dimensional monad theory.
\newblock In {\em Categories in algebra, geometry and mathematical physics},
  volume 431 of {\em Contemp. Math.}, pages 405--426. Amer. Math. Soc.,
  Providence, RI, 2007.

\bibitem[Pro96]{pronk:bicat-frac}
Dorette~A. Pronk.
\newblock Etendues and stacks as bicategories of fractions.
\newblock {\em Compositio Math.}, 102(3):243--303, 1996.

\bibitem[Rez10]{rezk:theta}
Charles Rezk.
\newblock A cartesian presentation of weak $n$-categories.
\newblock {\em Geometry \& Topology}, 14, 2010.
\newblock arXiv:0901.3602.

\bibitem[Rob12]{roberts:ana}
David~M. Roberts.
\newblock Internal categories, anafunctors and localisations.
\newblock {\em Theory and Applications of Categories}, 26(29):788--829, 2012.
\newblock arXiv:1101.2363.

\bibitem[RV17]{rv:fibyon-oocosmos}
Emily Riehl and Dominic Verity.
\newblock Fibrations and {Yoneda's} lemma in an $\infty$-cosmos.
\newblock {\em J. Pure Appl. Algebra}, 221(3):499--564, 2017.
\newblock arXiv:1506.05500.

\bibitem[Shu08]{shulman:frbi}
Michael Shulman.
\newblock Framed bicategories and monoidal fibrations.
\newblock {\em Theory and Applications of Categories}, 20(18):650--738
  (electronic), 2008.
\newblock arXiv:0706.1286.

\bibitem[Shu12]{shulman:psalg}
Michael Shulman.
\newblock Not every pseudoalgebra is equivalent to a strict one.
\newblock {\em Adv. Math.}, 229(3):2024--2041, 2012.
\newblock arXiv:1005.1520.

\bibitem[Sta16]{stay:ccb}
Michael Stay.
\newblock Compact closed bicategories.
\newblock {\em Theory and Applications of Categories}, 31(26):755--798, 2016.
\newblock arXiv:1301.1053.

\bibitem[Str72]{street:ftm}
Ross Street.
\newblock The formal theory of monads.
\newblock {\em J. Pure Appl. Algebra}, 2(2):149--168, 1972.

\bibitem[Str74]{street:fib-yoneda-2cat}
Ross Street.
\newblock Fibrations and {Y}oneda's lemma in a {$2$}-category.
\newblock In {\em Category Seminar (Proc. Sem., Sydney, 1972/1973)}, pages
  104--133. Lecture Notes in Math., Vol. 420. Springer, Berlin, 1974.

\bibitem[Str83]{street:absolute}
Ross Street.
\newblock Absolute colimits in enriched categories.
\newblock {\em Cahiers Topologie G\'eom. Diff\'erentielle}, 24(4):377--379,
  1983.

\bibitem[Web07]{weber:2toposes}
Mark Weber.
\newblock Yoneda structures from 2-toposes.
\newblock {\em Appl. Categ. Structures}, 15(3):259--323, 2007.

\end{references*}

\end{document}